\def\ad{\operatorname{ad}}
\newtheorem{theorem}{Theorem}[section]
\newtheorem{ansatz}[theorem]{Ansatz}
\newtheorem{lemma}[theorem]{Lemma}
\begin{document}
\title[A short proof]
{On Distinguished
	local coordinates for locally homogeneous affine surfaces}
\author{M. Brozos-V\'{a}zquez \, E. Garc\'{i}a-R\'{i}o\, P. Gilkey}
\address{MBV: Universidade da Coru\~na, Differential Geometry and its Applications Research Group, Escola Polit\'ecnica Superior, 15403 Ferrol,  Spain}
\email{miguel.brozos.vazquez@udc.gal}
\address{EGR: Faculty of Mathematics, University of Santiago de Compostela, 15782 Santiago de Compostela, Spain}
\email{eduardo.garcia.rio@usc.es}
\address{PBG: Mathematics Department, University of Oregon, Eugene OR 97403-1222, USA}
\email{gilkey@uoregon.edu}
\thanks{Supported by project MTM2016-75897-P (Spain).}
\subjclass[2010]{53C21}
\keywords{affine surface, locally homogeneous,  local forms, affine Killing equations}
\begin{abstract}{We give a new short self-contained proof of the result of Opozda~\cite{Op04}
classifying the locally homogeneous torsion free affine surfaces and the extension to the case of surfaces
with torsion due to Arias-Marco and Kowalski~\cite{AMK08}. Our approach rests on a direct analysis of the
affine Killing equations and is quite different than the approaches taken previously
in the literature.}
\end{abstract}
\maketitle

\section{Introduction}

We say that $\mathcal{M}=(M,\nabla)$ is an {\it affine surface}
if $M$ is a smooth connected 2-dimensional manifold and if $\nabla$ is a connection on the tangent bundle of $M$. 
We emphasize that $\nabla$ is permitted to have torsion.
We say that $\mathcal{M}$ is {\it locally homogenous} if given any
two points of $M$, there is a local diffeomorphism from a neighborhood of one point to a neighborhood of the other
point which preserves $\nabla$, i.e. is an affine map.  In a system of local coordinates, sum over repeated indices to expand
 $\nabla_{\partial_{x^i}}\partial_{x^j}=\Gamma_{ij}{}^k \partial_{x^k}$ to define the Christoffel symbols.

During the past few years, there has been a concerted effort to classify homogeneous affine surfaces. 
Kowalski, Opozda and Vl\'a\v sek \cite{KVOp2} provided the first major step by classifying the homogeneous
torsion free connections with skew-symmetric Ricci tensor.
Derdzinski \cite{D08} then extended their result using in an essential
fashion the fact that the curvature operator satisfies  the identity $R(x,y)=\rho(x,y)\operatorname{Id}$
in this setting.
Subsequently, Opozda~\cite{Op04} established a complete classification for locally homogeneous surfaces without
torsion. Finally Arias-Marco and Kowalski~\cite{AMK08} completed the program by extending
the Theorem of Opozda  to connections with torsion.
 The resulting full classification can be stated as follows.

\goodbreak\begin{theorem}\label{T1.1}
If $\mathcal{M}$ is a locally homogeneous affine surface, then at least one of the following three possibilities holds describing the local geometry:
\begin{enumerate}
\item There exists a coordinate atlas so that $\Gamma_{ij}{}^k\in\mathbb{R}$.
\item There exists a coordinate atlas so that
$\Gamma_{ij}{}^k=(x^1)^{-1}A_{ij}{}^k$ for $A_{ij}{}^k\in\mathbb{R}$.
\item There exists a coordinate atlas where
$\nabla$ is the Levi-Civita connection defined by the metric of the round sphere.\end{enumerate}
\end{theorem}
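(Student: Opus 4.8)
The plan is to analyse the affine Killing equations directly, working in coordinates adapted to one symmetry.

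\emph{Reduction and first normalization.} I would start from the classical fact that local homogeneity yields, near any point $p$, a finite dimensional Lie algebra $\mathfrak g$ of germs of affine Killing vector fields with the evaluation $\operatorname{ev}_p\colon\mathfrak g\to T_pM$ surjective; finite dimensionality follows because an affine Killing field is determined by its $1$-jet at a point, the Killing equations prolonged once expressing $\partial_i\partial_j X^k$ through $X$ and $\partial X$. Pick $X\in\mathfrak g$ with $X_p\ne 0$ and coordinates $(x^1,x^2)$ in which $X=\partial_{x^2}$; then $\mathcal L_X\nabla=0$ becomes $\partial_{x^2}\Gamma_{ij}{}^k=0$, so every Christoffel symbol is a function of $x^1$ alone. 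The coordinate changes preserving this normal form, namely $x^1\mapsto\phi(x^1)$ and $x^2\mapsto\varepsilon x^2+\psi(x^1)$ together with rescalings of $X$, are kept in reserve for the final adjustments.

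\emph{A second symmetry and the trichotomy.} Choose $Y\in\mathfrak g$ with $\{X_p,Y_p\}$ a basis of $T_pM$ and write $Y=a\,\partial_{x^1}+b\,\partial_{x^2}$. Because the iterates $(\operatorname{ad}_X)^m Y$ are eventually linearly dependent in $\mathfrak g$, the coefficients $a,b$ satisfy in $x^2$ a fixed constant-coefficient linear ODE whose characteristic roots lie among the eigenvalues of $\operatorname{ad}_X$; thus the $x^2$-dependence of any Killing field is governed by the Jordan form of $\operatorname{ad}_X$. A short case analysis on this Jordan form---run over all admissible $X$, and using local homogeneity to transport a conclusion from one point to every point---produces the trichotomy. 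Whenever some $\operatorname{ad}_X$ has a nonzero real eigenvalue, or a kernel strictly larger than $\mathbb R X$, or a two dimensional nilpotent block through $X$, one obtains a two dimensional subalgebra of $\mathfrak g$ acting locally simply transitively near a point; since there are only two real two dimensional Lie algebras, group coordinates then make either $\partial_{x^1}$ and $\partial_{x^2}$ both Killing, forcing $\Gamma_{ij}{}^k$ constant (case (1)), or $Y=x^1\partial_{x^1}+x^2\partial_{x^2}$, in which case $\mathcal L_Y\nabla=0$ with $\Gamma=\Gamma(x^1)$ collapses to $\tfrac{d}{dx^1}\bigl(x^1\Gamma_{ij}{}^k\bigr)=0$, that is $\Gamma_{ij}{}^k=(x^1)^{-1}A_{ij}{}^k$ (case (2)).

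\emph{The sphere, and the main obstacle.} The one surviving alternative is that for every nonzero $X\in\mathfrak g$ the operator $\operatorname{ad}_X$ has $0$-eigenspace exactly $\mathbb R X$ and only non-real nonzero eigenvalues, so that $\mathfrak g$ admits no simply transitive two dimensional subalgebra. Then $\dim\mathfrak g\ge 3$, the isotropy $\mathfrak g_p$ is nontrivial and acts on $T_pM$ with purely imaginary eigenvalues, and the second Killing field is trigonometric in $x^2$; substituting it into the full system $\mathcal L_Y\nabla=0$ pins the functions $\Gamma_{ij}{}^k(x^1)$ down to a rigid linear ODE system, whose solution---with the integration constants fixed by the isotropy relation---is, after an affine change of coordinates, precisely the Christoffel symbols of the round sphere metric. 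Structurally, this is the assertion that a transitive Lie algebra of vector fields on a surface without two dimensional subalgebras must be $\mathfrak{so}(3)$, and that the unique $SO(3)$-invariant connection on $S^2=SO(3)/SO(2)$ is the round Levi-Civita connection---in particular torsion free, so that this case has no torsion variant. This branch is the crux of the argument: every other configuration reduces to (1) or (2) by integrating elementary ODEs, and the whole difficulty---handled in the earlier literature by invoking curvature identities in the spirit of Derdzinski and Opozda---is to recognise the last geometry as the round sphere. I would finally note that torsion is irrelevant to the preceding steps, since $\mathcal L_X\nabla=0$, the normal form $\Gamma=\Gamma(x^1)$, and the computations in cases (1) and (2) never used $\Gamma_{ij}{}^k=\Gamma_{ji}{}^k$.
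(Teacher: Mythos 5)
Your strategy coincides with the paper's: normalize so that one Killing field is $\partial_{x^2}$ and $\Gamma=\Gamma(x^1)$, analyze the $x^2$-dependence of the remaining Killing fields through the (generalized) eigenvalues of $\operatorname{ad}(\partial_{x^2})$, and reduce to the two real two-dimensional Lie algebras or to $\mathfrak{so}(3)$, with the same endgame computations in each branch (Frobenius for the abelian case, $\tfrac{d}{dx^1}(x^1\Gamma_{ij}{}^k)=0$ for the non-abelian case, and an explicit ODE solve identifying the round sphere in the $\mathfrak{so}(3)$ case). The closing remark that torsion never enters is also the paper's point of view.

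However, the pivotal step --- that the trichotomy is exhaustive --- is asserted rather than proved, and one of the sub-claims driving it is false as stated. You claim that a kernel of $\operatorname{ad}_X$ strictly larger than $\mathbb{R}X$ yields a two-dimensional subalgebra acting locally simply transitively. It does yield a two-dimensional abelian subalgebra, but not necessarily an effective one: the extra kernel element can have the form $v(x^1)\partial_{x^2}$, pointwise proportional to $X=\partial_{x^2}$, so the pair never spans the tangent space. This is exactly where the paper must work hardest: it shows (Lemma~\ref{L3.2}) that the existence of such a field forces the Christoffel relations of Equation~(\ref{E3.a}) and that $x^2\partial_{x^2}$ is then itself Killing, and only from this auxiliary field does it manufacture an effective abelian pair (Lemma~\ref{L3.3}, Lemma~\ref{L3.5}). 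Similar unproved reductions occur elsewhere in your sketch: eigenvalues of the form $a+\sqrt{-1}$ with $a\ne0$ must be shown to collapse to the two-dimensional cases (the paper's Case~2 of Lemma~\ref{L3.4}, via $[X,\bar X]\in E(2a)$), nontrivial Jordan blocks over the eigenvalue $0$ must be excluded (the paper's infinite-string argument in Lemma~\ref{L3.5}), and in the purely imaginary case the bracket $[X_1,X_2]$ can still degenerate to $0$ or to a nonconstant multiple of $\partial_{x^2}$ rather than giving $\mathfrak{so}(3)$ outright. Each of these is resolved in the paper only by substituting back into specific affine Killing equations; without those computations the case analysis is a plan, not a proof.
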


The compact case was considered in \cite{AG14, Opozda}. If $M$ is compact, then either $\nabla$ is torsion-free and flat, $\nabla$ is the Levi-Civita connection of a surface of constant curvature, or $\nabla$ is a connection with
$\Gamma_{ij}{}^k\in\mathbb{R}$ and $M$ is a torus.
The special case of locally symmetric affine surfaces was addressed in \cite{Opozda1991}, where it is shown that 
any locally symmetric affine surface is either modeled on a surface of constant curvature with the Levi-Civita connection or, up to linear equivalence, on one of two affine surfaces which have
the form given in Theorem~\ref{T1.1}-(1).
Theorem~\ref{T1.1} has been useful in many works on affine surfaces, 
including but not limited to \cite{D13,D09,KVOp3,KS14}. We also refer to Kowalski et al.~\cite{KVOp4} for another proof of
Theorem~\ref{T1.1} in the torsion free setting.

We shall give a short and self-contained proof of Theorem~\ref{T1.1} by
examining the affine Killing equations rather than by studying the curvature tensor
or by using classification results of Lie algebras of vector fields. The structure of the Lie algebra of affine
Killing vector fields $\mathfrak{K}(\mathcal{M})$ will play a crucial role in our analysis. We choose
coordinate systems so that the vector field $\partial_{x^2}$ is an affine Killing vector field. We complexify and
consider the generalized eigenspaces of $\mathfrak{K}_{\mathbb{C}}(\mathcal{M})$ as an
$\operatorname{ad}(\partial_{x^2})$ module. What is new in this approach is the mixture of Lie theory
together with the affine Killing equations that affords, we believe, a more direct and conceptual approach to the
proof of Theorem~\ref{T1.1}.

\section{Affine Killing vector fields}
We recall the following result of
Kobayashi and Nomizu~\cite[Chapter VI]{KN63}.

\goodbreak
\begin{lemma}\label{L2.1} Let $\mathcal{M}=(M,\nabla)$.
\begin{enumerate}
\item The following 3 conditions are equivalent and if any is satisfied, then $X$ is said to be
an {\rm affine Killing vector field}.
\begin{enumerate}
\item Let $\Phi_t^X$ be the local flow of $X$. Then $(\Phi_t^X)_*\circ\nabla=\nabla\circ(\Phi_t^X)_*$.
\item The Lie derivative of $\nabla$ with respect to $X$ vanishes.
\item $[X,\nabla_YZ]-\nabla_Y[X,Z]-\nabla_{[X,Y]}Z=0$ for all $Y,Z\in C^\infty(TM)$.
\end{enumerate}
\item Let $\mathfrak{K}(\mathcal{M})$ be the vector space of affine Killing vector fields.
The Lie bracket gives $\mathfrak{K}(\mathcal{M})$ a Lie algebra structure. We have that
$\dim\{\mathfrak{K}(\mathcal{M})\}\le 6$. 
\end{enumerate}
\end{lemma}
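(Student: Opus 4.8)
The plan, following the classical approach of Kobayashi and Nomizu, is to dispose of part~(1) by two short formal manipulations and to extract the dimension estimate in part~(2) from a first-order prolongation of the affine Killing equation.

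For the equivalence of (b) and (c), I would set
$$B(Y,Z):=[X,\nabla_YZ]-\nabla_{[X,Y]}Z-\nabla_Y[X,Z]$$
and check directly that $B$ is $C^\infty(M)$-linear separately in $Y$ and in $Z$, so that it is a $(1,2)$-tensor field; the Leibniz rule for the Lie derivative applied to $\nabla_YZ$ then identifies $B(Y,Z)$ with $(\mathcal L_X\nabla)(Y,Z)$, so that condition (c) is literally the statement $\mathcal L_X\nabla=0$, which is (b). For the equivalence of (a) and (b), write $\Phi_t=\Phi^X_t$; since $(\Phi_t)_*\nabla$ is again a connection, the difference $(\Phi_t)_*\nabla-\nabla$ is a tensor field, and the one-parameter group law $\Phi_{t+s}=\Phi_t\circ\Phi_s$ lets one transport $\frac{d}{dt}(\Phi_t)_*\nabla$ at a general $t$ to its value at $t=0$; hence $(\Phi_t)_*\nabla=\nabla$ for all small $t$ if and only if $\mathcal L_X\nabla=0$.

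Part~(2) has two ingredients. That $\mathfrak K(\mathcal M)$ is a Lie algebra is quick: condition (c) is $\mathbb R$-linear in $X$, so $\mathfrak K(\mathcal M)$ is a linear subspace of $C^\infty(TM)$, and the routine identity $\mathcal L_{[X,Y]}\nabla=\mathcal L_X\mathcal L_Y\nabla-\mathcal L_Y\mathcal L_X\nabla$ shows it is closed under the Lie bracket. For the bound, given $X\in\mathfrak K(\mathcal M)$ set $A:=\nabla X\in C^\infty(\operatorname{End}(TM))$, so that $A(Y)=\nabla_YX$. Expanding (c) using $[U,W]=\nabla_UW-\nabla_WU-T(U,W)$ and the definition of the curvature operator $R$ should yield an identity of the form
$$(\nabla_YA)(Z)=-R(X,Y)Z-(\nabla_YT)(X,Z)-T(A(Y),Z),$$
which expresses $\nabla A$ algebraically and $\mathbb R$-linearly in the pair $(X,A)$, the coefficients being the fixed geometric tensors $R$, $T$, $\nabla T$ (in the torsion-free case this reduces to the classical relation $\nabla_Y\nabla_ZX-\nabla_{\nabla_YZ}X=-R(X,Y)Z$). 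Consequently, along any curve in $M$ the pair $(X,A)$ satisfies a linear first-order ordinary differential equation and so is determined by its value at one point of the curve. Fixing $p\in M$, the set of points of $M$ at which both $X$ and $A$ vanish is closed, and it is open by this uniqueness; by connectedness it is all of $M$ as soon as it contains $p$. Thus the linear map $X\mapsto(X_p,A_p)$ from $\mathfrak K(\mathcal M)$ into $T_pM\oplus\operatorname{End}(T_pM)$ is injective, and therefore $\dim\mathfrak K(\mathcal M)\le\dim T_pM+\dim\operatorname{End}(T_pM)=2+4=6$.

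I expect the only real obstacle to be the bookkeeping in the prolongation identity: one must keep the torsion contributions $T$ and $\nabla T$, which are absent from the classical torsion-free computation, and verify that they enter only algebraically in $(X,\nabla X)$, so that the prolonged system stays first order. The tensoriality of $B$, the flow argument, and the uniqueness-plus-connectedness step are all routine.
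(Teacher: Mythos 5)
The paper states this lemma without proof, simply citing Kobayashi--Nomizu, and your argument is a correct reconstruction of that classical proof: the prolongation identity $(\nabla_YA)(Z)=-R(X,Y)Z-(\nabla_YT)(X,Z)-T(A(Y),Z)$ does come out exactly as you state when one expands condition (1c) using $[U,W]=\nabla_UW-\nabla_WU-T(U,W)$ and the product rule for $\nabla_Y(T(X,Z))$. The torsion terms therefore enter only algebraically in $(X,A)$ as you anticipated, the prolonged system stays first order, and the injectivity of $X\mapsto(X_p,(\nabla X)_p)$ gives $\dim\mathfrak{K}(\mathcal{M})\le 2+4=6$.
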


Let $X=a^k\partial_{x^k}$. By Lemma~\ref{L2.1}~(1c), $X$ is an affine Killing vector field if and only if
$X$ satisfies the 8 affine Killing equations for $1\le i,j,k\le 2$
$$
K_{ij}{}^k:\quad 0=\frac{\partial^2a^k}{\partial_{x^i}\partial_{x^j}}+\sum_\ell\left\{
a^\ell\frac{\partial\Gamma_{ij}{}^k}{\partial x^\ell}-\Gamma_{ij}{}^l\frac{\partial{a^k}}{\partial x^\ell}
+\Gamma_{i\ell}{}^k\frac{\partial a^\ell}{\partial x^j}+\Gamma_{\ell j}{}^k\frac{\partial a^\ell}{\partial_{x^i}}
\right\}\,.
$$

Choose a point $P$ of $M$; which point is irrelevant as we shall assume
that $\mathcal{M}$ is locally homogeneous henceforth. 
We work at the level of germs and
assume $M$ is an arbitrarily small neighborhood of $P$. If, for example, we are given a
vector field $\Xi$ which does not vanish identically near $P$,
we can choose a slightly different base point $\tilde P$ where $\Xi(\tilde P)\ne0$. 
To pass to global results, we shall assume the underlying manifold $M$ is simply connected to avoid
difficulties with holonomy; in this setting, every Killing vector field which is locally defined extends to a
globally defined Killing vector field.
We shall not belabor these points in what follows.
We say that a subset $S$ of $\mathfrak{K}(\mathcal{M})$ is {\it effective} if
there exist $X_i\in S$ so that $\{X_1(P),X_2(P)\}$ are linearly independent. Since $\mathcal{M}$
is locally homogeneous, $\mathfrak{K}(\mathcal{M})$ is effective \cite{Hall, Nomizu}. We define the following Lie algebras by their relations
\begin{equation}\label{E2.a}\begin{array}{l}
\mathfrak{K}_{\mathcal{A}}:=\operatorname{Span}\{X,Y\}\text{ for }[X,Y]=0,\ 
\mathfrak{K}_{\mathcal{B}}:=\operatorname{Span}\{X,Y\}\text{ for }[X,Y]=Y,\\
\mathfrak{so}(3):=\operatorname{Span}\{X,Y,Z\}\text{ for }[X,Y]=Z,\ [Y,Z]=X,\ 
[Z,X]=Y\,.
\end{array}\end{equation}

Theorem~\ref{T1.1} will be a consequence of the following result.

\begin{lemma}\label{L2.2}
Let $\mathcal{M}=(M,\nabla)$ be locally homogeneous and simply connected. 
\begin{enumerate}
\item There is an effective Lie subalgebra $\tilde{\mathfrak{K}}$ of $\mathfrak{K}(\mathcal{M})$
which is isomorphic to $\mathfrak{K}_{\mathcal{A}}$, $\mathfrak{K}_{\mathcal{B}}$, or $\mathfrak{so}(3)$.
\item If $\tilde{\mathfrak{K}}\approx\mathfrak{K}_{\mathcal{A}}$, then
there is a coordinate atlas so that $\Gamma_{ij}{}^k\in\mathbb{R}$.
\item If $\tilde{\mathfrak{K}}\approx\mathfrak{K}_{\mathcal{B}}$, then
there is a coordinate atlas so that
$\Gamma_{ij}{}^k=(x^1)^{-1}A_{ij}{}^k$ for $A_{ij}{}^k\in\mathbb{R}$.
\item If $\tilde{\mathfrak{K}}\approx\mathfrak{so}(3)$, then
there is a coordinate atlas where
$\nabla$ is the Levi-Civita connection defined by the metric of the round sphere.
\end{enumerate}\end{lemma}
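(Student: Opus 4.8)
The plan is to prove (1) by abstract Lie theory applied to $\mathfrak{K}:=\mathfrak{K}(\mathcal{M})$ together with its isotropy subalgebra, and then to prove (2)--(4) by choosing coordinates adapted to the effective subalgebra $\tilde{\mathfrak{K}}$ and reading off the Christoffel symbols from the affine Killing equations $K_{ij}{}^k$.

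For part (1), note first that since $\mathcal{M}$ is locally homogeneous, $\mathrm{ev}_P\colon\mathfrak{K}\to T_PM$, $X\mapsto X(P)$, is onto, so its kernel $\mathfrak{I}$ (the isotropy subalgebra) has dimension $\dim\mathfrak{K}-2$, while $2\le\dim\mathfrak{K}\le6$ by Lemma~\ref{L2.1}. Since the equations $K_{ij}{}^k$ express the $2$-jet of $X$ in terms of its $1$-jet, an affine Killing field is determined by its value and first derivatives at $P$; hence the isotropy representation $\mathfrak{I}\hookrightarrow\mathfrak{gl}(T_PM)\cong\mathfrak{gl}(2,\mathbb{R})$ is faithful, which in particular re-proves $\dim\mathfrak{K}\le6$ and excludes a copy of $\mathfrak{so}(3)$ inside $\mathfrak{I}$. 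If $\dim\mathfrak{K}=2$ then $\mathfrak{I}=0$ and $\mathfrak{K}$ is itself an effective two-dimensional Lie algebra, hence $\approx\mathfrak{K}_{\mathcal{A}}$ (if abelian) or $\approx\mathfrak{K}_{\mathcal{B}}$ (the unique non-abelian one). If $\dim\mathfrak{K}\ge3$ and $\mathfrak{K}$ contains a subalgebra $\approx\mathfrak{so}(3)$, that copy meets $\mathfrak{I}$ in a proper subalgebra of $\mathfrak{so}(3)$, hence in dimension $\le1$, so it is automatically effective; we take $\tilde{\mathfrak{K}}=\mathfrak{so}(3)$. Otherwise, picking $X\in\mathfrak{K}\setminus\mathfrak{I}$ and using the generalized eigenspace decomposition of $\mathfrak{K}_{\mathbb{C}}$ under $\ad(X)$ as in the introduction, one extracts an eigenvector $W$ and runs a finite check over the possible $\mathfrak{I}\subset\mathfrak{gl}(2,\mathbb{R})$: faithfulness of the isotropy representation rules out the configurations in which $\mathrm{Span}\{X,W\}$ (or some variant) fails to be effective, so one lands on an effective two-dimensional subalgebra, necessarily $\approx\mathfrak{K}_{\mathcal{A}}$ or $\mathfrak{K}_{\mathcal{B}}$.

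Parts (2) and (3) both use that if a coordinate vector field $\partial_{x^m}$ is an affine Killing field then $K_{ij}{}^k$ evaluated on it (i.e.\ $a^\ell=\delta^\ell_m$) gives $\partial_{x^m}\Gamma_{ij}{}^k=0$. In case (2), choose commuting $X,Y\in\tilde{\mathfrak{K}}$ with $\{X(P),Y(P)\}$ independent; as $[X,Y]=0$ there are coordinates with $X=\partial_{x^1}$, $Y=\partial_{x^2}$, whence $\partial_{x^1}\Gamma_{ij}{}^k=\partial_{x^2}\Gamma_{ij}{}^k=0$ and $\Gamma_{ij}{}^k\in\mathbb{R}$. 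In case (3), effectiveness forces $\mathrm{ev}_P$ to be injective on $\tilde{\mathfrak{K}}=\mathrm{Span}\{X,Y\}$, so $Y(P)\neq0$; straighten $Y=\partial_{x^2}$, and then $[X,\partial_{x^2}]=\partial_{x^2}$ forces $X=X^1(x^1)\partial_{x^1}+(-x^2+h(x^1))\partial_{x^2}$ with $X^1(P)\neq0$. Using the residual coordinate freedom $x^1\mapsto\phi(x^1)$, $x^2\mapsto x^2+\beta(x^1)$, which preserves $Y=\partial_{x^2}$, one normalizes $X=-(x^1\partial_{x^1}+x^2\partial_{x^2})$ on a neighbourhood of a point with $x^1\neq0$. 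Then $K_{ij}{}^k$ for $Y$ gives $\partial_{x^2}\Gamma_{ij}{}^k=0$, and $K_{ij}{}^k$ for $X$ collapses (the second-order term vanishes and the four first-order terms combine to $-\Gamma_{ij}{}^k$) to $x^1\partial_{x^1}\Gamma_{ij}{}^k+\Gamma_{ij}{}^k=0$; integrating, $\Gamma_{ij}{}^k=(x^1)^{-1}A_{ij}{}^k$ with $A_{ij}{}^k\in\mathbb{R}$.

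For part (4), $\tilde{\mathfrak{K}}\approx\mathfrak{so}(3)=\mathrm{Span}\{X,Y,Z\}$ has one-dimensional isotropy; after an automorphism of $\mathfrak{so}(3)$ we may take it to be $\langle X\rangle$, so $X(P)=0$, $\{Y(P),Z(P)\}$ spans $T_PM$, and $\ad(X)$ acts on this span with eigenvalues $\pm i$. In $\nabla$-geodesic normal coordinates centred at $P$ every affine map fixing $P$ is linear, so $X$ is linear there, and as its linearization is a rotation generator we may take $X=x^1\partial_{x^2}-x^2\partial_{x^1}$. The equations $K_{ij}{}^k$ for $X$ then force each $\Gamma_{ij}{}^k$ to be invariant under the rotation flow, hence built from the rotationally invariant tensors with coefficients functions of $r^2=(x^1)^2+(x^2)^2$; imposing in turn the equations for $Y$ and $Z$ pins down the finitely many remaining functions, and one recognizes the Levi--Civita connection of the round metric, the torsion being forced to vanish. (Conceptually, this reflects the fact that an $SO(3)$-invariant connection on the homogeneous model $SO(3)/SO(2)$ is unique, so it must be the round-sphere connection.) Solving this constrained Killing system and identifying it with the round sphere, while excluding spurious torsion connections, is the step I expect to be the main obstacle; parts (1)--(3) are comparatively routine.
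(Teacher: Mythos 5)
Your parts (2) and (3) essentially reproduce the paper's argument (Frobenius for $\mathfrak{K}_{\mathcal{A}}$; straightening $Y=\partial_{x^2}$, the shear $x^2\mapsto x^2+\varepsilon(x^1)$ of the paper's Ansatz~\ref{A3.6}, and the reparametrization of $x^1$ for $\mathfrak{K}_{\mathcal{B}}$) and are fine. The problem is that parts (1) and (4), which carry almost all the content, each have a genuine gap. In part (1) the sentence ``runs a finite check over the possible $\mathfrak{I}\subset\mathfrak{gl}(2,\mathbb{R})$: faithfulness of the isotropy representation rules out the configurations in which $\operatorname{Span}\{X,W\}$ fails to be effective'' is not merely incomplete --- it is false as stated. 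Take $\mathfrak{K}=\operatorname{Span}\{X,Y,Z\}$ with $[X,Y]=0$, $[X,Z]=aX+Y$, $[Y,Z]=aY-X$ (exactly the algebra of the paper's Lemma~\ref{L3.3}). Since $\operatorname{ad}(Z)$ acts on the abelian ideal $\operatorname{Span}\{X,Y\}$ with eigenvalues $-a\pm\sqrt{-1}$, it has no real eigenvector there, so the \emph{only} two--dimensional subalgebra is $\operatorname{Span}\{X,Y\}$. If the isotropy line is $\mathfrak{I}=\langle Y-\lambda X\rangle\subset\operatorname{Span}\{X,Y\}$, that unique subalgebra is not effective; yet $[Y-\lambda X,Z]\equiv-(\lambda^2+1)X\bmod\mathfrak{I}$, so the isotropy representation is faithful. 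Abstract Lie theory therefore cannot close this case. The paper escapes it only by feeding the affine Killing equations back in: Lemma~\ref{L3.2} forces Equation~(\ref{E3.a}) and then produces a \emph{new} Killing field $x^2\partial_{x^2}$ outside the given algebra, from which an effective $\mathfrak{K}_{\mathcal{A}}$ is built. The same geometric (non-Lie-theoretic) input is needed elsewhere, e.g.\ to kill the $\partial_{x^1}$-component in Cases 2.2 and 3.2 of Lemma~\ref{L3.4} and to exclude genuinely generalized eigenvectors in Lemma~\ref{L3.5} via the bound $\dim\mathfrak{K}\le6$. What you are sketching is in effect the group-theoretic route of Kowalski--Opozda--Vl\'a\v sek, which is a paper-length case analysis, not a consequence of faithfulness.

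In part (4) your set-up (normal coordinates at the fixed point of the one-dimensional isotropy, linearity of the rotation generator, rotational invariance of the connection) is legitimate and genuinely different from the paper's, which instead normalizes the three vector fields to $Z=\partial_{x^2}$, $X=\cos(x^2)\partial_{x^1}+\tan(x^1)\sin(x^2)\partial_{x^2}$, $Y=-\sin(x^2)\partial_{x^1}+\tan(x^1)\cos(x^2)\partial_{x^2}$ and integrates the resulting ODEs for the $\Gamma_{ij}{}^k$, showing all eight free constants vanish. But you explicitly defer the decisive step --- solving the constrained Killing system, showing the torsion vanishes, and identifying the result with the round sphere --- so as written this is a plan, not a proof. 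Your parenthetical remark that the $SO(3)$-invariant connection on $SO(3)/SO(2)$ is unique is correct (the space of $\operatorname{Ad}(SO(2))$-equivariant bilinear maps $\mathbb{R}^2\times\mathbb{R}^2\to\mathbb{R}^2$ is zero) and could be turned into a clean alternative proof of part (4), but you would then need to justify that the local geometry is the homogeneous model rather than leave it as an aside.
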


The possibilities of Assertion~(2) and Assertion~(3) are not exclusive; 
the non-flat examples such that both Assertion~(2) and Assertion~(3) hold along with a complete
description of the Lie algebras $\mathfrak{K}(\mathcal{M})$
are given in \cite{BGG18}.
By contrast, any $\mathcal{M}$ which admits an effective 
$\mathfrak{so}(3)$ Lie subalgebra of $\mathfrak{K}(\mathcal{M})$ satisfies
$\dim\{\mathfrak{K}(\mathcal{M})\}=3$ and
does not admit any 2-dimensional Lie subalgebras of affine Killing vector fields.
In Theorem~\ref{T1.1}, we do not impose the hypothesis that $\mathcal{M}$ is simply
connected as the question of suitable coordinate systems is a local one. By contrast, in Lemma~\ref{L2.2},
we must impose the hypothesis that $\mathcal{M}$ is simply connected since the question of affine Killing
vector fields is a global one.

By Lemma~\ref{L2.1}, 
$\dim\{\mathfrak{K}(\mathcal{M})\}\le6$. 
Complexify and set $\mathfrak{K}_{\mathbb{C}}(\mathcal{M}):=\mathfrak{K}(\mathcal{M})\otimes_{\mathbb{R}}\mathbb{C}$.

\begin{lemma}\label{L2.3} Choose $\Xi\in\mathfrak{K}(\mathcal{M})$ with $\Xi(P)\ne0$.
For $\alpha\in\mathbb{C}$, let 
\smallbreak\centerline{$E(\alpha):=\{X_\alpha\in\mathfrak{K}_{\mathbb{C}}(\mathcal{M}):(\ad(\Xi)-\alpha)^6X_\alpha=0\}$}
\smallbreak\noindent
be the associated generalized eigenspace of $\ad(\Xi)$. Then $[E(\alpha),E(\beta)]\subset E(\alpha+\beta)$.
\end{lemma}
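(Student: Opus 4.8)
The plan is to reduce the statement to a standard fact about how a derivation interacts with a generalized eigenspace decomposition, the only subtlety being that $\ad(\Xi)$ is a derivation of the Lie algebra $\mathfrak{K}_{\mathbb{C}}(\mathcal{M})$. First I would record the Leibniz rule: for $D=\ad(\Xi)$ and any $X,Y\in\mathfrak{K}_{\mathbb{C}}(\mathcal{M})$ one has $D[X,Y]=[DX,Y]+[X,DY]$, which is just the Jacobi identity. Consequently, for a scalar $\lambda\in\mathbb{C}$ and the twisted operator $D-\lambda$ acting on a bracket, I would prove by induction on $n$ the binomial-type identity
\[
(D-\alpha-\beta)^n[X,Y]=\sum_{j=0}^{n}\binom{n}{j}\bigl[(D-\alpha)^j X,\ (D-\beta)^{n-j}Y\bigr].
\]
The base case $n=0$ is trivial, and the inductive step follows by applying $D-\alpha-\beta$ to the right-hand side, using the Leibniz rule to split $D$ as $(D-\alpha)+\alpha$ on the first slot and $(D-\beta)+\beta$ on the second, and then invoking Pascal's rule to recombine the coefficients.

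Given this identity, the inclusion $[E(\alpha),E(\beta)]\subset E(\alpha+\beta)$ is immediate for a suitable exponent: take $X\in E(\alpha)$ and $Y\in E(\beta)$, so $(D-\alpha)^6X=0$ and $(D-\beta)^6Y=0$. Apply the identity with $n=11$: in every term of the sum either $j\ge 6$, whence $(D-\alpha)^jX=0$, or $n-j=11-j\ge 6$, whence $(D-\beta)^{n-j}Y=0$. Hence $(D-\alpha-\beta)^{11}[X,Y]=0$. This shows $[X,Y]$ lies in the generalized eigenspace of $\ad(\Xi)$ for eigenvalue $\alpha+\beta$ defined with exponent $11$. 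To land exactly in $E(\alpha+\beta)$ as defined in the statement (exponent $6$), I would note that $\dim\mathfrak{K}_{\mathbb{C}}(\mathcal{M})=\dim\mathfrak{K}(\mathcal{M})\le 6$ by Lemma~\ref{L2.1}, so on this space the minimal polynomial of any single operator has degree at most $6$; therefore the generalized eigenspace of $\ad(\Xi)$ for any eigenvalue is already killed by $(\ad(\Xi)-\gamma)^6$, and raising the exponent beyond $6$ does not enlarge it. Thus $[X,Y]\in E(\alpha+\beta)$.

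The only genuine point requiring care — the "main obstacle" such as it is — is bookkeeping with the exponent: one must make sure the chosen power ($11$ here, though any value $\ge 11$ works) is large enough that every binomial term vanishes, and then separately justify that the fixed exponent $6$ in the definition of $E(\alpha)$ loses nothing because of the dimension bound $\dim\{\mathfrak{K}(\mathcal{M})\}\le 6$. Everything else is the routine derivation-Leibniz computation. I would present the binomial identity with a one-line inductive proof and then the two-line exponent count, keeping the whole argument to a short paragraph.
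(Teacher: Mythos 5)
Your argument is correct, but it takes a genuinely different route from the paper's. You treat $\ad(\Xi)$ abstractly as a derivation of the finite-dimensional Lie algebra $\mathfrak{K}_{\mathbb{C}}(\mathcal{M})$, establish the binomial identity $(D-\alpha-\beta)^n[X,Y]=\sum_{j}\binom{n}{j}\bigl[(D-\alpha)^jX,(D-\beta)^{n-j}Y\bigr]$ by induction from the Leibniz rule, and then count exponents; the one point that genuinely needs saying --- that passing from exponent $11$ back to the exponent $6$ appearing in the definition of $E(\alpha+\beta)$ loses nothing because $\dim\{\mathfrak{K}(\mathcal{M})\}\le 6$ forces the generalized eigenspaces to stabilize by exponent $6$ --- you do say, and correctly. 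The paper argues concretely instead: it chooses coordinates with $\Xi=\partial_{x^2}$, observes that $X_\alpha\in E(\alpha)$ if and only if $X_\alpha$ has the explicit form of Equation~(\ref{E2.b}) (an $e^{\alpha x^2}$ factor times a vector field whose coefficients are polynomial in $x^2$), and notes that the bracket of two such fields carries the exponential $e^{(\alpha+\beta)x^2}$. Your approach is more general and more self-contained as a piece of Lie theory --- it works for any derivation of any finite-dimensional Lie algebra and never mentions vector fields --- whereas the paper's proof does double duty: the normal form~(\ref{E2.b}) it establishes is not a throwaway step but is reused immediately afterwards (for instance in Equation~(\ref{E3.b}) and throughout the proof of Lemma~\ref{L2.2}~(1)). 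If you substituted your proof, that normal form would still have to be justified separately where it is first used.
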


\begin{proof} Choose local coordinates so $\Xi=\partial_{x^2}$.
Then $X_\alpha\in E(\alpha)$ if and only if
\begin{equation}\label{E2.b}
X_\alpha=e^{\alpha x^2}\left\{\sum_{i=0}^{i_0}u_i(x^1)(x^2)^i\partial_{x^1}
+\sum_{j=0}^{j_0}v_{j}(x^1)(x^2)^j\partial_{x^2}\right\}\,.
\end{equation}
This leads to an expansion for $[X_\alpha,X_\beta]$ where the relevant exponential is $e^{(\alpha+\beta)x^2}$ that shows
$[X_\alpha,X_\beta]\in E(\alpha+\beta)$.
\end{proof}

\section{The proof of Lemma~\ref{L2.2}}

The following coordinate normalization will be used for much of our analysis; a different normalization
will be used in examining the structure $\mathfrak{so}(3)$.

\begin{lemma}\label{L3.1}
Let $\Xi\in\mathfrak{K}(\mathcal{M})$ satisfy $\Xi(P)\ne0$. We can choose local coordinates centered
at $P$ so that $\Xi=\partial_{x^2}$ and so that
$$
\Gamma_{ij}{}^k(x^1,x^2)=\Gamma_{ij}{}^k(x^1),\quad
\Gamma_{11}{}^1(x^1)=0,\quad\Gamma_{11}{}^2(x^1)=0\,.
$$
\end{lemma}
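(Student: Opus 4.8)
The plan is to build the desired coordinates in two stages: first straighten out $\Xi$, then use the remaining gauge freedom to kill the two Christoffel symbols $\Gamma_{11}{}^1$ and $\Gamma_{11}{}^2$ along the $x^1$-axis and, in fact, everywhere. First I would invoke the standard flow-box theorem: since $\Xi(P)\ne0$, there exist local coordinates centered at $P$ in which $\Xi=\partial_{x^2}$. In such coordinates, the fact that $\Xi$ is an affine Killing vector field translates, via Lemma~\ref{L2.1}(1a) (its flow preserves $\nabla$) or directly via the Killing equations $K_{ij}{}^k$ with $a^1=0$, $a^2=1$, into $\partial_{x^2}\Gamma_{ij}{}^k=0$, i.e. all Christoffel symbols are functions of $x^1$ alone. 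This gives the first displayed condition for free and, crucially, any further coordinate change of the form $(x^1,x^2)\mapsto(\tilde x^1(x^1),\tilde x^2(x^1)+x^2)$ preserves both the normalization $\Xi=\partial_{x^2}$ and the property that the Christoffel symbols depend only on the first coordinate.

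Next I would use precisely this residual freedom. Under a change $\tilde x^1=\phi(x^1)$, $\tilde x^2=\psi(x^1)+x^2$, the transformation law for Christoffel symbols shows that $\tilde\Gamma_{11}{}^1$ and $\tilde\Gamma_{11}{}^2$ are governed by second-order linear ODEs in $\phi$ and $\psi$ respectively with coefficients built from the old $\Gamma_{11}{}^k(x^1)$: schematically, $\tilde\Gamma_{11}{}^1$ involves $\phi''/(\phi')^{?}$ plus $\Gamma_{11}{}^1$-terms, so setting it to zero is a second-order ODE for $\phi$ which has a local solution with $\phi'(0)\ne0$; having fixed $\phi$, the equation $\tilde\Gamma_{11}{}^2=0$ becomes a second-order linear ODE for $\psi$, again locally solvable. (One must check that $\tilde\Gamma_{11}{}^1=0$ is preserved once we then adjust $\psi$; this holds because the $\psi$-reparametrization is $x^2$-translation by a function of $x^1$ and only affects components with an upper index $2$.) Since the Christoffel symbols are still functions of $x^1$ alone, vanishing at $x^1=0$ is not what we get — rather we get vanishing identically in $x^1$, which is the stated conclusion.

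The main obstacle I expect is bookkeeping in the transformation rule: one must verify that the ODE for $\phi$ that kills $\Gamma_{11}{}^1$ is genuinely solvable (nondegenerate in $\phi''$) and that it does not inadvertently force $\phi'$ to vanish, and then that the subsequent choice of $\psi$ does not disturb $\tilde\Gamma_{11}{}^1=0$. A clean way to see the first point is to recall that $\Gamma_{11}{}^1=0$ is exactly the statement that the curve $x^2=\text{const}$ is an (affinely parametrized, up to the $1$-direction) geodesic of the induced one-dimensional connection $x^1\mapsto\Gamma_{11}{}^1$; reparametrizing by arc length / affine parameter of this connection is always possible locally, which is the desired $\phi$. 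Then $\tilde\Gamma_{11}{}^2=0$ is a linear inhomogeneous second-order ODE for $\psi$, trivially solvable. Everything is local and centered at $P$, so no global hypotheses enter here.
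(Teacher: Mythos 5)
Your first paragraph is correct and coincides with the paper's first step: flow-box coordinates give $\Xi=\partial_{x^2}$, the Killing equation for $\partial_{x^2}$ gives $\Gamma_{ij}{}^k=\Gamma_{ij}{}^k(x^1)$, and the residual gauge group is exactly $(\tilde x^1,\tilde x^2)=(\phi(x^1),x^2+\psi(x^1))$. The gap is in your second stage, precisely at the point you flagged and then dismissed. Under $\tilde x^1=x^1$, $\tilde x^2=x^2+\psi(x^1)$ one has $\partial_{\tilde x^1}=\partial_{x^1}-\psi'(x^1)\partial_{x^2}$, and computing $\nabla_{\partial_{\tilde x^1}}\partial_{\tilde x^1}$ gives
$$
\tilde\Gamma_{11}{}^1=\Gamma_{11}{}^1-\psi'\bigl(\Gamma_{12}{}^1+\Gamma_{21}{}^1\bigr)+(\psi')^2\,\Gamma_{22}{}^1\,,
$$
so the $x^2$-translation does \emph{not} ``only affect components with an upper index $2$'': it tilts the first coordinate direction and generically destroys the normalization $\tilde\Gamma_{11}{}^1=0$ you had just arranged with $\phi$. (A smaller inaccuracy: the resulting equation for $\psi$ is quadratic, indeed cubic once the correction term $\psi'\tilde\Gamma_{11}{}^1$ is included, in $\psi'$, not linear; it is still locally solvable, so that is harmless.)

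The argument is repairable, but the order must be reversed. First solve the second-order ODE in $\psi$ that kills $\Gamma_{11}{}^2$; then reparametrize $\tilde x^1=\phi(x^1)$, under which $\tilde\Gamma_{11}{}^2=\Gamma_{11}{}^2/(\phi')^2$ stays zero while $\tilde\Gamma_{11}{}^1=(\phi'\,\Gamma_{11}{}^1-\phi'')/(\phi')^2$ is killed by solving $\phi''=\phi'\,\Gamma_{11}{}^1$ with $\phi'(0)\ne0$. Equivalently — and this is the paper's route, which sidesteps the ordering issue entirely — observe that $\Gamma_{11}{}^1=\Gamma_{11}{}^2=0$ says exactly that the curves $x^2=\mathrm{const}$ are affinely parametrized geodesics of $\nabla$ (of the full two-dimensional connection, not just of the ``induced one-dimensional connection'' as in your last paragraph, which only accounts for $\Gamma_{11}{}^1$). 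The paper therefore takes a single geodesic $\sigma$ through $P$ transverse to $\Xi$ and uses its translates under the flow of $\Xi$, which are again geodesics because $\Xi$ is affine Killing, as the coordinate curves $x^2=\mathrm{const}$. This solves your coupled system for $(\phi,\psi)$ in one stroke via the geodesic equation and yields the two vanishing conditions simultaneously.
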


\begin{proof}
Choose initial coordinates $(y^1,y^2)$ centered at $P$ so that $\Xi=\partial_{y^2}$. Since $\Xi$
is an affine Killing vector field,
$\Gamma=\Gamma(y^1)$ and the map $(y^1,y^2)\rightarrow(y^1,y^2+t)$ is an affine map. Let
$\sigma(s)$ be a geodesic with $\sigma(0)=0$ and with $\{\dot\sigma(0),\Xi(0)\}$ linearly independent.
Let $T(x^1,x^2):=\sigma(x^1)+(0,x^2)$ define new coordinates with $\partial_{x^2}=\partial_{y^2}$.
Since the curves $x^1\rightarrow T(x^1,x^2)$ are geodesics for $x^2$ fixed and 
since $\partial_{x^2}$ is an affine Killing vector field,
the normalizations of the Lemma hold.\end{proof}

With the coordinate normalization of Lemma~\ref{L3.1}, $\partial_{x^2}$ is a Killing vector field. 
We now examine other Killing vector fields.

\begin{lemma}\label{L3.2} Use Lemma~\ref{L3.1} to normalize the system of local coordinates. 
Set 	
$\mathfrak{K}_\alpha(\mathcal{M}):=\{X=e^{\alpha x^2}v(x^1)\partial_{x^2}:X\in\mathfrak{K}_{\mathbb{C}}(\mathcal{M})\}$.
\begin{enumerate}
\item If there exists  $X\in\mathfrak{K}_\alpha(\mathcal{M})$, which is not a constant multiple of $\partial_{x^2}$, then 
\begin{equation}\label{E3.a}
\Gamma_{11}{}^1=0,\ \Gamma_{11}{}^2=0,\ \Gamma_{12}{}^1=0,\ \Gamma_{21}{}^1=0,\ \Gamma_{22}{}^1=0,\ \Gamma_{22}{}^2=-\alpha.
 \end{equation}
 \item Suppose that the Christoffel symbols satisfy Equation~(\ref{E3.a}).
\begin{enumerate}
\item If $u(x^1,x^2)\partial_{x^1}+w(x^1,x^2)\partial_{x^2}\in\mathfrak{K}_{\mathbb{C}}(\mathcal{M})$, then
\begin{enumerate}
\item$\alpha\   u^{(0,1)}(x^1,x^2)+u^{(0,2)}(x^1,x^2)=0$,
\item$(\Gamma_{12}{}^2(x^1)+\Gamma_{21}{}^2(x^1))w^{(1,0)}(x^1,x^2)+w^{(2,0)}=0$.
\end{enumerate}
\item 
$\mathfrak{K}_\alpha(\mathcal{M})=\{e^{\alpha x^2}v(x^1)\partial_{x^2}:(\Gamma_{12}{}^2(x^1)+\Gamma_{21}{}^2(x^1))v^\prime(x^1)+v^{\prime\prime}(x^1)=0\}$.
\item Assume $\alpha=0$.
If $u(x^1,x^2)\partial_{x^1}+\{\sum_nw_n(x^1)(x^2)^n\}\partial_{x^2}\in\mathfrak{K}_{\mathbb{C}}(\mathcal{M})$, then
 $w_n(x^1)\partial_{x^2}\in\mathfrak{K}_0(\mathcal{M})$ for all $n$. Furthermore,
 $x^2\partial_{x^2}\in\mathfrak{K}(\mathcal{M})$.
\end{enumerate}\end{enumerate}\end{lemma}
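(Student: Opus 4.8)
The plan is to substitute each of the vector fields under discussion directly into the eight affine Killing equations $K_{ij}{}^k$, using the coordinate normalization of Lemma~\ref{L3.1} (so that $\Gamma_{ij}{}^k=\Gamma_{ij}{}^k(x^1)$ and $\Gamma_{11}{}^1=\Gamma_{11}{}^2=0$), and to read the asserted identities off the resulting relations. All four parts are in the end bookkeeping with these eight scalar equations; the one non-algebraic ingredient is the uniqueness theorem for a linear first-order ordinary differential equation.

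For Assertion~(1), set $X=e^{\alpha x^2}v(x^1)\partial_{x^2}$, so that $a^1\equiv0$, and cancel the common factor $e^{\alpha x^2}$ throughout. Then $K_{11}{}^1$ reads $(\Gamma_{12}{}^1+\Gamma_{21}{}^1)v'=0$, the equations $K_{12}{}^1$, $K_{21}{}^1$, $K_{22}{}^1$ become homogeneous linear relations among $\Gamma_{12}{}^1,\Gamma_{21}{}^1,\Gamma_{22}{}^1$ (with coefficients built from $v$, $v'$, $\alpha$), the equation $K_{11}{}^2$ collapses to $v''+(\Gamma_{12}{}^2+\Gamma_{21}{}^2)v'=0$, and $K_{12}{}^2$, $K_{21}{}^2$, $K_{22}{}^2$ become $(\alpha-\Gamma_{12}{}^1+\Gamma_{22}{}^2)v'=0$, $(\alpha-\Gamma_{21}{}^1+\Gamma_{22}{}^2)v'=0$, $\alpha(\alpha+\Gamma_{22}{}^2)v-\Gamma_{22}{}^1v'=0$. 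Now $X$ is not a constant multiple of $\partial_{x^2}$ exactly when $e^{\alpha x^2}v(x^1)$ is nonconstant, i.e.\ when $v'\not\equiv0$, or when $\alpha\neq0$ and $v\not\equiv0$. If $v'\not\equiv0$, then $v'$ solves the linear equation $(v')'=-(\Gamma_{12}{}^2+\Gamma_{21}{}^2)v'$, so by uniqueness $v'$ vanishes nowhere; dividing by $v'$ and combining $K_{11}{}^1$, $K_{12}{}^2$, $K_{21}{}^2$ gives $\Gamma_{12}{}^1=\Gamma_{21}{}^1=0$ and $\Gamma_{22}{}^2=-\alpha$, and then $K_{12}{}^1$ gives $\Gamma_{22}{}^1=0$; together with Lemma~\ref{L3.1} this is Equation~(\ref{E3.a}). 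If instead $v'\equiv0$ and $\alpha\neq0$, so that $v$ is a nonzero constant, only $K_{12}{}^1$, $K_{21}{}^1$, $K_{22}{}^1$, $K_{22}{}^2$ remain nontrivial, and dividing by $v$ gives the same conclusion.

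For Assertion~(2)(a) one assumes Equation~(\ref{E3.a}), so that the only Christoffel symbols which may be nonzero are $\Gamma_{12}{}^2(x^1)$, $\Gamma_{21}{}^2(x^1)$ and the constant $\Gamma_{22}{}^2=-\alpha$. A direct expansion then shows that $K_{22}{}^1$ is precisely $\alpha\,u^{(0,1)}+u^{(0,2)}=0$, which is~(i), and that $K_{11}{}^2$ is precisely $(\Gamma_{12}{}^2+\Gamma_{21}{}^2)w^{(1,0)}+w^{(2,0)}=0$, which is~(ii); each is nothing more than an individual Killing equation in this normalization. For Assertion~(2)(b): taking $u\equiv0$ and $w=e^{\alpha x^2}v(x^1)$ in~(ii) gives $(\Gamma_{12}{}^2+\Gamma_{21}{}^2)v'+v''=0$ as a necessary condition, and conversely a short verification shows that under Equation~(\ref{E3.a}) this single ODE forces all eight Killing equations to hold for $e^{\alpha x^2}v(x^1)\partial_{x^2}$, which yields the reverse inclusion. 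For Assertion~(2)(c), with $\alpha=0$: writing $w=\sum_n w_n(x^1)(x^2)^n$, the equation $K_{11}{}^2$, which involves only $w$, separates by powers of $x^2$ into $w_n''+(\Gamma_{12}{}^2+\Gamma_{21}{}^2)w_n'=0$ for every $n$, whence $w_n(x^1)\partial_{x^2}\in\mathfrak{K}_0(\mathcal{M})$ by Assertion~(2)(b); and $x^2\partial_{x^2}$ satisfies all eight Killing equations by a direct check.

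I expect the principal difficulty to be purely organizational: one must keep careful track of which of the products $\Gamma_{ij}{}^\ell\partial_\ell a^k$, $\Gamma_{i\ell}{}^k\partial_j a^\ell$, $\Gamma_{\ell j}{}^k\partial_i a^\ell$ survive under the successive normalizations — for instance the term $\Gamma_{ij}{}^2\partial_2 a^1$ inside $K_{ij}{}^1$ must not be dropped, since it is exactly what contributes the $\alpha\,u^{(0,1)}$ appearing in~(i). The only step that is not routine bookkeeping is the appeal to uniqueness of solutions of the linear ODE in Assertion~(1): it upgrades ``$v'$ is not identically zero'' to ``$v'$ vanishes nowhere'', and hence turns the relations extracted from the Killing equations into pointwise identities among the Christoffel symbols, rather than identities valid only where $v'\neq0$.
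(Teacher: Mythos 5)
Your proposal is correct and follows essentially the same route as the paper: direct substitution of the relevant vector fields into the eight affine Killing equations under the normalization of Lemma~\ref{L3.1}, identifying (2-a-i) with $K_{22}{}^1$, (2-a-ii) and (2-b) with $K_{11}{}^2$, and separating powers of $x^2$ for (2-c). The only difference is organizational: you split Assertion~(1) by whether $v'\equiv 0$ rather than by whether $\alpha=0$, and you make explicit (via uniqueness for the first-order linear ODE satisfied by $v'$) that $v'$ is nowhere vanishing before dividing by it --- a point the paper passes over silently.
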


\begin{proof} Our choice of coordinate system yields $\Gamma_{11}{}^1=\Gamma_{11}{}^2=0$.  It is convenient to decompose the proof of Assertion~(1) into two cases.
\smallbreak\noindent{\bf Case 1.1.} Suppose $\alpha\ne0$. Assume $0\ne X=e^{\alpha x^2}v(x^1)\partial_{x^2}\in\mathfrak{K}_{\mathbb{C}}(\mathcal{M})$. Equation~(\ref{E3.a}) follows from the equations
\begin{eqnarray*}
&&K_{22}{}^1:\ 0=e^{\alpha x^2}\{2\alpha \Gamma_{22}{}^1(x^1)v(x^1)\}, \text{ so }\Gamma_{22}{}^1(x^1)=0.\\
&&K_{12}{}^1:\ 0=e^{\alpha x^2}\{\alpha  \Gamma_{12}{}^1(x^1) v(x^1)+\Gamma_{22}{}^1(x^1) v^\prime(x^1)\},
\text{ so }\Gamma_{12}{}^1=0.\\
&&K_{21}{}^1:\ 0=e^{\alpha x^2}\{\alpha\Gamma_{21}{}^1(x^1)v(x^1)
+\Gamma_{22}{}^1(x^1)v^\prime(x^1)\}, \text{ so }\Gamma_{21}{}^1=0.\\
&&K_{22}{}^2:\ 0=e^{\alpha x^2}\{\alpha  v(x^1) (\alpha +\Gamma_{22}{}^2(x^1))- \Gamma_{22}{}^1(x^1) v^\prime(x^1)\},
\text{ so }\Gamma_{22}{}^2=-\alpha\,.
\end{eqnarray*}
\smallbreak\noindent{\bf Case 1.2.} Suppose $\alpha=0$. The assumption that 
$X=v(x^1)\partial_{x^2}$ is not a constant multiple of
 $\partial_{x^2}$ implies $v$ is non-constant so
$v^\prime\ne0$. Equation~(\ref{E3.a}) follows from the equations
$$\begin{array}{lll}
K_{11}{}^1\!:\ 0=(\Gamma_{12}{}^1(x^1)+\Gamma_{21}{}^1(x^1)) v^\prime(x^1),\!&\!
K_{12}{}^1\!:\ 0=\Gamma_{22}{}^1(x^1) v^\prime(x^1),\\[0.05in]
K_{12}{}^2\!:\ 0=(\Gamma_{22}{}^2(x^1)-\Gamma_{12}{}^1(x^1)) v^\prime(x^1),\!&\!
K_{21}{}^2\!:\ 0=(\Gamma_{22}{}^2(x^1)-\Gamma_{21}{}^1(x^1))v^\prime(x^1)\,.
\end{array}$$

Assume Equation~(\ref{E3.a}) holds. Assertion~(2-a) follows from the affine
Killing equations
$K_{11}{}^2$: $0=(\Gamma_{12}{}^2+\Gamma_{21}{}^2)w^{(1,0)}+w^{(2,0)}$ and
$K_{22}{}^1$: $0=\alpha\ u^{(0,1)}+u^{(0,2)}$.
Assertion~(2-b) follows as $K_{11}{}^2$: $0=e^{\alpha x^2}((\Gamma_{12}{}^2+\Gamma_{21}{}^2)v^\prime+v^{\prime\prime})$ is
 the only non-trivial affine Killing equation for $e^{\alpha x^2}v(x^1)\partial_{x^2}$.
To prove Assertion~(2-c), assume that
 $u(x^1,x^2)\partial_{x^1}+\{\sum_nw_n(x^1)(x^2)^n\}\partial_{x^2}\in\mathfrak{K}_{\mathbb{C}}(\mathcal{M})$.
By Assertion~(2-a-ii), 
$\sum_n\{(\Gamma_{12}{}^2(x^1)+\Gamma_{21}{}^2(x^1))w_n^\prime(x^1)+w_n^{\prime\prime}(x^1)\}(x^2)^n=0$.
Thus each $w_n(x^1)$ satisfies
the ODE individually so by Assertion~(2-b), $w_n(x^1)\partial_{x^n}\in\mathfrak{K}_0(\mathcal{M})$.
One verifies directly that $x^2\partial_{x^2}$ is an affine Killing vector field in this setting.
\end{proof}

We use Lemma~\ref{L3.2} to study a Lie algebra structure which will arise subsequently.

\begin{lemma}\label{L3.3}
Suppose there is a 3-dimensional effective Lie subalgebra of $\mathfrak{K}(\mathcal{M})$ 
which is spanned by vector fields $X,Y,Z$ satisfying $[X,Y]=0$, $[X,Z]=aX+Y$, and $[Y,Z]=aY-X$ for $a\in\mathbb{R}$. Then there exists an effective Lie subalgebra of $\mathfrak{K}(\mathcal{M})$
isomorphic to $\mathfrak{K}_{\mathcal{A}}$.
\end{lemma}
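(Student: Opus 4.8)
The plan is to exploit the $2$-dimensional abelian ideal $\mathfrak{a}:=\operatorname{Span}\{X,Y\}$. On $\mathfrak{a}$ the operator $\operatorname{ad}(Z)$ acts by $X\mapsto -aX-Y$ and $Y\mapsto X-aY$, so its eigenvalues are $-a\pm i$ and it has no real eigenvector in $\mathfrak{a}$. If $\mathfrak{a}$ is effective then $\mathfrak{a}\approx\mathfrak{K}_{\mathcal A}$ and we are done, so assume $\mathfrak{a}$ is not effective. Then $X(Q)$ and $Y(Q)$ are linearly dependent for every $Q$ near $P$ (otherwise $\mathfrak{a}$ would be effective at $Q$), and since $\operatorname{Span}\{X,Y,Z\}$ is effective this forces $Z(P)\neq0$.

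Apply Lemma~\ref{L3.1} with $\Xi=Z$, so $Z=\partial_{x^2}$ and $\Gamma=\Gamma(x^1)$. The elements $W_\pm:=X\pm iY\in\mathfrak{K}_{\mathbb C}(\mathcal M)$ are genuine eigenvectors of $\operatorname{ad}(\partial_{x^2})$ for the eigenvalues $-a\pm i$, so, refining Equation~(\ref{E2.b}) (the polynomial part is constant since $W_\pm$ is a genuine, not merely generalized, eigenvector), $W_+=e^{(-a+i)x^2}\{u(x^1)\partial_{x^1}+v(x^1)\partial_{x^2}\}$. If $u\equiv0$, then $W_+\in\mathfrak{K}_{-a+i}(\mathcal M)$ is not a constant multiple of $\partial_{x^2}$, so Lemma~\ref{L3.2}~(1) gives $\Gamma_{22}{}^2=-(-a+i)=a-i$, which is impossible as $\Gamma_{22}{}^2$ is real-valued; hence $u\not\equiv0$. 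Replacing $P$ by a nearby point at which $W_+$ is not proportional to $Z$, we may assume $u$ is nowhere zero. Since $\mathfrak{a}$ is not effective, $X(Q)=\operatorname{Re}W_+(Q)$ and $Y(Q)=\operatorname{Im}W_+(Q)$ are parallel for all $Q$; a short computation shows this forces $v=\phi\,u$ for a real-valued $\phi=\phi(x^1)$, so that $W_+=e^{(-a+i)x^2}u(x^1)\,\widetilde W$ with $\widetilde W:=\partial_{x^1}+\phi(x^1)\partial_{x^2}$ and $[\partial_{x^2},\widetilde W]=0$.

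Since $\widetilde W$ and $\partial_{x^2}$ commute and are independent at $P$, choose coordinates $(s^1,s^2)$ with $\partial_{s^1}=\widetilde W$ and $\partial_{s^2}=\partial_{x^2}=Z$. Then $\partial_{s^2}$ is affine Killing, so $\Gamma=\Gamma(s^1)$ in these coordinates, and $W_\pm=e^{(-a\pm i)s^2}\psi_\pm(s^1)\,\partial_{s^1}$ with $\psi_+=:\psi$ nowhere zero and $\psi_-=\bar\psi$. It remains only to show that $\Gamma$ is constant: granting this, $\operatorname{Span}\{\partial_{s^1},\partial_{s^2}\}$ is a $2$-dimensional abelian subalgebra of $\mathfrak{K}(\mathcal M)$ whose generators are independent at $P$, hence an effective copy of $\mathfrak{K}_{\mathcal A}$, which finishes the proof. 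To prove constancy of $\Gamma$, substitute $W_+$, $W_-$, and $\partial_{s^2}$ into the affine Killing equations $K_{ij}{}^k$; for each $(i,j,k)$ the field $W_{\pm}$ yields a relation $e^{(-a\pm i)s^2}\cdot(\text{polynomial in }\psi_\pm,\psi_\pm',\psi_\pm'',\Gamma,\Gamma')=0$, and, precisely because $-a+i$ is not real, the relation from $W_+$ and its conjugate from $W_-$ together decouple into enough independent equations that, $\psi$ being nowhere zero, $(\Gamma_{ij}{}^k)'\equiv0$ for all $i,j,k$.

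The main obstacle is the non-effective case, and inside it this last step: arranging the overdetermined system produced by $W_+,W_-,\partial_{s^2}$ so as to read off that every (real-valued) Christoffel symbol is constant. The governing mechanism—that a real quantity forced to equal a genuinely complex number, or to satisfy a redundant real/imaginary pair of constraints, must collapse—is already on display in the step that rules out $\Gamma_{22}{}^2=a-i$. A minor point requiring care is that the allowed changes of base point remain compatible with the normalization of Lemma~\ref{L3.1} and with the requirement that $u$ be non-vanishing.
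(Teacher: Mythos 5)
Your reduction to the non-effective case, the normalization $Z=\partial_{x^2}$, the identification of $W_\pm=X\pm iY$ as genuine $\ad(\partial_{x^2})$-eigenvectors, the elimination of the case $u\equiv0$ via the reality of $\Gamma_{22}{}^2$, and the passage to coordinates $(s^1,s^2)$ in which $W_+=e^{(-a+i)s^2}\psi(s^1)\partial_{s^1}$ are all correct, and genuinely different from the paper (which normalizes a member of the abelian ideal, not $Z$, to be $\partial_{x^2}$). The proof fails at the final step: the assertion that the affine Killing equations for $W_\pm$ and $\partial_{s^2}$ force $\Gamma$ to be constant is false. Writing out $K_{22}{}^1$ for $W_+$ and dividing by $e^{\beta s^2}$ with $\beta=-a+i$ gives
$0=\beta^2\psi+\psi(\Gamma_{22}{}^1)^\prime-\Gamma_{22}{}^1\psi^\prime-\Gamma_{22}{}^2\beta\psi+(\Gamma_{12}{}^1+\Gamma_{21}{}^1)\beta\psi$;
since $[W_+,W_-]=0$ forces $\psi$ to be a unimodular constant times a real function, one may take $\psi$ real, and separating real and imaginary parts yields $(\Gamma_{22}{}^1)^\prime=(a^2+1)+\Gamma_{22}{}^1\psi^\prime/\psi$, which is not zero in general. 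Concretely, take the connection whose only nonzero Christoffel symbol is $\Gamma_{22}{}^1=s^1$. Then $Z=\partial_{s^2}$, $X=\cos(s^2)\partial_{s^1}$, $Y=\sin(s^2)\partial_{s^1}$ are affine Killing fields satisfying exactly your relations with $a=0$, $\{X,Z\}$ is effective at the origin, $\{X,Y\}$ is nowhere effective, and your construction returns $\widetilde W=\partial_{s^1}$ and the coordinates $(s^1,s^2)$ themselves; but $\partial_{s^1}$ is not an affine Killing field, since $K_{22}{}^1$ for $\partial_{s^1}$ reads $0=\partial_{s^1}\Gamma_{22}{}^1=1$. So the proposed pair $\{\partial_{s^1},\partial_{s^2}\}$ need not lie in $\mathfrak{K}(\mathcal{M})$.

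The underlying problem is that the three given Killing fields do not pin down the connection tightly enough in your coordinates; the missing ingredient is the production of \emph{additional} Killing fields. The paper obtains these by setting $X=\partial_{x^2}$: then $Y=v(x^1)\partial_{x^2}$ with $v^\prime\ne0$ triggers Lemma~\ref{L3.2}~(1) with $\alpha=0$, which annihilates six of the eight Christoffel symbols, and Lemma~\ref{L3.2}~(2-c) then supplies the new Killing fields $x^2\partial_{x^2}$ and $v_0(x^1)\partial_{x^2}$; the effective abelian pair is $\{\tilde Z,x^2\partial_{x^2}\}$ with $\tilde Z$ a correction of $Z$. In the counterexample above this machinery does produce an effective copy of $\mathfrak{K}_{\mathcal{A}}$ --- just not the one your argument names. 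To salvage your route you would need to carry out an analogous enlargement of the Killing algebra in the $(s^1,s^2)$ coordinates; asserting that the overdetermined system ``decouples into enough independent equations'' does not do this, and the claimed conclusion of that step is simply wrong as stated.
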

\begin{proof}

 \smallbreak The Lemma is immediate if $\{X,Y\}$ is effective.
Consequently, we assume that $Y$ is a multiple of $X$ and $\{X,Z\}$ is effective.
Normalize the coordinate system as in
Lemma~\ref{L3.1} so that $X=\partial_{x^2}$ and thus $Y=v(x^1,x^2)\partial_{x^2}$. 
Since $[X,Y]=0$, $\partial_{x^2}v=0$ and thus
$v=v(x^1)$.  As $Y$ is not a constant multiple of $X$,  $v^\prime(x^1)\ne0$.
By Lemma~\ref{L3.2}~(1), the relations of Equation~(\ref{E3.a}) hold with $\alpha=0$. 
By Lemma~\ref{L3.2}~(2-c),
$x^2\partial_{x^2}\in\mathfrak{K}(\mathcal{M})$.
Expand $Z=u(x^1,x^2)\partial_{x^1}+w(x^1,x^2)\partial_{x^2}$.
We have
\begin{eqnarray*}[X,Z]&=&\partial_{x^2}u(x^1,x^2)\partial_{x^1}+\partial_{x^2}w(x^1,x^2)\partial_{x^2}\\
&=&aX+Y=(a+v(x^1))\partial_{x^2}\,.
\end{eqnarray*}
Thus $u=u(x^1)$ and $w=(a+v(x^1))x^2+v_0(x^1)$. As $\{X,Z\}$ is effective, $u\ne0$. By Lemma~\ref{L3.2}~(2-c),
$v_0(x^1)\partial_{x^2}\in\mathfrak{K}_0(\mathcal{M})$. Thus
$\tilde Z:=u(x^1)\partial_{x^1}+(a+v(x^1)x^2)\partial_{x^2}$ belongs to $\mathfrak{K}(\mathcal{M})$. 
Since $[\tilde Z,x^2\partial_{x^2}]=0$,
$\operatorname{Span}\{\tilde Z,x^2\partial_{x^2}\}$ is an effective Lie subalgebra of $\mathfrak{K}(\mathcal{M})$
which is isomorphic to $\mathfrak{K}_{\mathcal{A}}$.\end{proof}

\subsection{The proof of Lemma~\ref{L2.2}~(1)}
Use Lemma~\ref{L3.1} to normalize the coordinate system. Choose $X\in E(\alpha)$ for some $\alpha$
so $\{X,\partial_{x^2}\}$ is effective. Expand $X$ in the form given in Equation~(\ref{E2.b}). Since $\{X,\partial_{x^2}\}$ is effective, $u_i\ne0$ for some $i$. Choose $i_0$ maximal so $u_{i_0}\ne0$.
Apply $(\ad(\partial_{x^2})-\alpha)^{i_0}$ to $X$ to assume that $i_0=0$ so
\begin{equation}\label{E3.b}
X=e^{\alpha x^2}\left\{u(x^1)\partial_{x^1}+\sum_{j=0}^{j_0}v_j(x^1)(x^2)^j\partial_{x^2}\right\}\text{ for }u\ne0\,.
\end{equation}

We first examine $E(\alpha)$ for $\alpha\ne0$.

\begin{lemma}\label{L3.4}If $\alpha\ne0$, then there exists an
effective Lie subalgebra of $\mathfrak{K}{(\mathcal{M})}$ isomorphic to $\mathfrak{K}_{\mathcal{A}}$,
$\mathfrak{K}_{\mathcal{B}}$, or $\mathfrak{so}(3)$.
\end{lemma}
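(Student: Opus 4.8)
The plan is to use Lemma~\ref{L3.2} to bring $X$ to a minimal form and then to read off $\tilde{\mathfrak K}$ directly from the bracket $[\partial_{x^2},X]$; when $\alpha$ is not real one passes to real and imaginary parts. First I would show $j_0=0$ in Equation~(\ref{E3.b}). Since $(\ad(\partial_{x^2})-\alpha)(e^{\alpha x^2}F(x^1,x^2)\partial_{x^i})=e^{\alpha x^2}(\partial_{x^2}F)\partial_{x^i}$, applying $(\ad(\partial_{x^2})-\alpha)^{j_0}$ to the $X$ of Equation~(\ref{E3.b}) annihilates the $\partial_{x^1}$-term and leaves $e^{\alpha x^2}j_0!\,v_{j_0}(x^1)\partial_{x^2}$, which for $j_0\ge1$ is a nonzero element of $\mathfrak K_\alpha(\mathcal M)$ that is not a constant multiple of $\partial_{x^2}$ (as $\alpha\ne0$). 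Then Lemma~\ref{L3.2}~(1) forces Equation~(\ref{E3.a}), and Lemma~\ref{L3.2}~(2-a-i) applied to the $\partial_{x^1}$-coefficient $e^{\alpha x^2}u(x^1)$ of $X$ gives $2\alpha^2e^{\alpha x^2}u=0$, hence $u\equiv0$, contradicting Equation~(\ref{E3.b}). So $X=e^{\alpha x^2}\{u(x^1)\partial_{x^1}+v(x^1)\partial_{x^2}\}$ with $u\not\equiv0$ and $[\partial_{x^2},X]=\alpha X$, while $\{X,\partial_{x^2}\}$ is effective. If $\alpha\in\mathbb R$, then $E(\alpha)$ is the complexification of a real subspace of $\mathfrak K(\mathcal M)$, so (after shifting the base point if needed so $u(P)\ne0$) $X$ may be taken real and $\operatorname{Span}\{\alpha^{-1}\partial_{x^2},X\}\approx\mathfrak K_{\mathcal B}$ is effective.

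Now let $\alpha=a+ib$ with $b\ne0$, put $X_1=\operatorname{Re}X$, $X_2=\operatorname{Im}X\in\mathfrak K(\mathcal M)$, and $W:=[X_1,X_2]$. Then $[\partial_{x^2},X_1]=aX_1-bX_2$, $[\partial_{x^2},X_2]=bX_1+aX_2$, and the Jacobi identity gives $[\partial_{x^2},W]=2aW$, so $W\in E(2a)$ is an honest $\ad(\partial_{x^2})$-eigenvector. If $W=0$, then after rescaling $\partial_{x^2}$ so that $b=1$ the linearly independent triple $X_1,X_2,\partial_{x^2}$ satisfies the hypotheses of Lemma~\ref{L3.3} (with $Y=X_2$ and a suitable real parameter), which produces an effective subalgebra $\approx\mathfrak K_{\mathcal A}$. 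If $W\ne0$ but $W$ is not everywhere proportional to $\partial_{x^2}$, then $W(\tilde P)$ is independent of $\partial_{x^2}(\tilde P)$ at a suitable base point $\tilde P$, and $\operatorname{Span}\{(2a)^{-1}\partial_{x^2},W\}\approx\mathfrak K_{\mathcal B}$ (if $a\ne0$) or $\operatorname{Span}\{\partial_{x^2},W\}\approx\mathfrak K_{\mathcal A}$ (if $a=0$) is effective. If $W\ne0$ is everywhere proportional to $\partial_{x^2}$, then $[\partial_{x^2},W]=2aW$ forces $W=e^{2ax^2}f_0(x^1)\partial_{x^2}\in\mathfrak K_{2a}(\mathcal M)$; unless $a=0$ and $f_0$ is constant this is not a constant multiple of $\partial_{x^2}$, so Lemma~\ref{L3.2}~(1) applies with parameter $2a$ and then Lemma~\ref{L3.2}~(2-a-i) applied to $X$ gives $\alpha(3a+ib)e^{\alpha x^2}u=0$, again forcing $u\equiv0$ --- a contradiction.

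The remaining case --- $a=0$ and $W=c\,\partial_{x^2}$ with $c\in\mathbb R\setminus\{0\}$ --- is the crux. Here $\tilde{\mathfrak K}:=\operatorname{Span}\{\partial_{x^2},X_1,X_2\}$ is a three-dimensional effective subalgebra with $[X_1,X_2]=c\,\partial_{x^2}$, $[\partial_{x^2},X_1]=-bX_2$, $[\partial_{x^2},X_2]=bX_1$; in the basis $\{\partial_{x^2},X_1,X_2\}$ its Killing form is $\operatorname{diag}(-2b^2,2bc,2bc)$, so $\tilde{\mathfrak K}\approx\mathfrak{so}(3)$ when $bc<0$, which settles that case, and $\tilde{\mathfrak K}\approx\mathfrak{sl}(2,\mathbb R)$ when $bc>0$. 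In the latter situation the isotropy $\{Z\in\tilde{\mathfrak K}:Z(P)=0\}$ is a one-dimensional subalgebra $\mathbb R\xi$, and since every element of $\mathfrak{sl}(2,\mathbb R)$ lies in at most two Borel subalgebras (and an elliptic one in none) there is a Borel $\mathfrak b\approx\mathfrak K_{\mathcal B}$ with $\xi\notin\mathfrak b$, so $\mathfrak b$ is effective at $P$. I expect the main obstacles to be: keeping track of exactly when $W$ does or does not reduce through Lemma~\ref{L3.2}; correctly identifying $\tilde{\mathfrak K}$ via its Killing form in the $\mathfrak{so}(3)$ versus $\mathfrak{sl}(2,\mathbb R)$ dichotomy; and, throughout, verifying effectiveness of the extracted subalgebra, which occasionally requires replacing $P$ by a nearby point.
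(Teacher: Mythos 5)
Your proposal is correct and, for most of its length, runs parallel to the paper's proof: the reduction to $j_0=0$ via $(\ad(\partial_{x^2})-\alpha)^{j_0}$, Lemma~\ref{L3.2}~(1) and the computation $2\alpha^2e^{\alpha x^2}u=0$ is verbatim the paper's argument; the real-$\alpha$ case is handled identically; and your $W=[\Re X,\Im X]$ is (up to a factor) the paper's $Y_1=\sqrt{-1}[X,\bar X]$, with the same trichotomy ($W$ has a $\partial_{x^1}$-component, $W$ is a non-constant multiple of $\partial_{x^2}$ and forces $u=0$, or $W=0$ and Lemma~\ref{L3.3} applies). Two points of genuine difference. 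First, you treat $a\ne0$ and $a=0$ uniformly through $W\in E(2a)$, where the paper splits into its Cases 2 and 3 after normalizing $b=1$; your computation $\alpha(3a+ib)u=0$ specializes to both of the paper's expressions $(3a^2-1+4a\sqrt{-1})u=0$ and $u^{(0,2)}=-u=0$, so nothing is lost. Second, and more substantively, in the residual case $a=0$, $[X_1,X_2]=c\,\partial_{x^2}$ with $c$ a nonzero constant, the paper simply declares ``this gives the Lie algebra $\mathfrak{so}(3)$,'' whereas you correctly observe via the Killing form $\operatorname{diag}(-2b^2,2bc,2bc)$ that one sign of $bc$ yields $\mathfrak{so}(3)$ and the other yields $\mathfrak{sl}(2,\mathbb{R})$, and in the latter case you extract an effective Borel subalgebra $\approx\mathfrak{K}_{\mathcal{B}}$ by avoiding the one-dimensional isotropy line. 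This is not pedantry: the hyperbolic plane, with $\Xi$ an elliptic (rotational) Killing field, lands exactly in this case with the $\mathfrak{sl}(2,\mathbb{R})$ sign, so the paper's assertion as written skips a case that your argument covers; the conclusion of the Lemma is unaffected either way, since $\mathfrak{K}_{\mathcal{B}}$ is on the allowed list. Your version is therefore slightly longer but strictly more complete. Only minor quibbles remain: in the $W=0$ case the identification with the generators of Lemma~\ref{L3.3} requires taking $Z=-\partial_{x^2}$ and swapping the roles of $X_1,X_2$ (a relabeling you gesture at but do not pin down), and you should note, as you do elsewhere, that effectiveness of the various two-dimensional spans may require moving the base point to where the relevant $\partial_{x^1}$-coefficient is nonzero.
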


\begin{proof}Adopt the notation established above.
We wish to show $j_0=0$. Suppose to the contrary that $v_j\ne0$ for some $j>0$. Choose $j_0$ maximal so $v_{j_0}\ne0$ and hence
$$0\ne(\ad(\partial_{x^2})-\alpha)^{j_0}X=j_0!e^{\alpha x^2}v_{j_0}(x^1)\partial_{x^2}
\in\mathfrak{K}_{\alpha}(\mathcal{M})\,.$$
By Lemma~\ref{L3.2}~(1), Equation~(\ref{E3.a}) holds. 
We apply Lemma~\ref{L3.2}~(2-a-i) to see $2\alpha^2\ e^{\alpha x^2}u(x^1)=0$ so $u=0$
contrary to our assumption. Thus we conclude $j_0=0$
and $X=e^{\alpha x^2}\{u(x^1)\partial_{x^1}+v(x^1)\partial_{x^2}\}$.
\smallbreak\noindent{\bf Case 1.} Suppose $\alpha\in\mathbb{R}$. 
 $[\partial_{x^2},X]=\alpha X$ so $[\alpha^{-1}\partial_{x^2},X]=X$. Since
$\{X,\partial_{x^2}\}$ is effecttive, we have an effective Lie subalgebra isomorphic to $\mathfrak{K}_{\mathcal{B}}$.
We therefore assume $\alpha\in\mathbb{C}-\mathbb{R}$. By rescaling $x^2$, we may suppose 
$\alpha=a+\sqrt{-1}$ for $a\ge0$.
\smallbreak\noindent{\bf Case 2.} Assume
$a\ne0$. Choose $a$ maximal so there exists $X\in E(a+\sqrt{-1})$ so $\{X,\partial_{x^2}\}$ is effective. Expand
$X=e^{ax^2}e^{\sqrt{-1} x^2}\{u(x^1)\partial_{x^1}+v(x^1)\partial_{x^2}\}$.
We have $\bar X\in E(\bar\alpha)$. Let $Y_1:=\sqrt{-1}[X,\bar X]$.  By Lemma~\ref{L2.3},  $Y_1\in E(2a)$.
Since $\bar Y_1=Y_1$, $Y_1$ is real.
Decompose
$Y_1=e^{2a x^2}\{u_1(x^1)\partial_{x^1}+v_1(x^1)\partial_{x^2}\}$.
\smallbreak\noindent{\bf Case 2.1.} If $u_1\ne0$, then we may apply Case 1 to $Y_1$.
\smallbreak\noindent{\bf Case 2.2.} If $u_1=0$ and if $v_1\ne0$, then we apply Lemma~\ref{L3.2}~(1)
to see that Equation~(\ref{E3.a}) holds with $\Gamma_{22}{}^2=-2a$.
We apply Lemma~\ref{L3.2}~(2-a-i) to $X$ to see $(3a^2-1+4a\sqrt{-1})e^{\alpha x^2}u(x^1)=0$
so $u=0$ contrary to our assumption.
\smallbreak\noindent{\bf Case 2.3.} If $u_1=0$ and $v_1=0$, then $[X,\bar X]=0$ and Lemma~\ref{L3.3} pertains
with respect to the Lie algebra $\{\Re(X),\Im(X),\partial_{x^2}\}$, since
\begin{eqnarray*}
&&[\Im(X),\Re(X)]=0,\quad [\Im(X),-\partial_{x^2}]=a\Im(X)+\Re(X),\\
&&[\Re(X),-\partial_{x^2}]=a\Re(X)-\Im(X)\,.
\end{eqnarray*}
\medbreak\noindent{\bf Case 3.} Suppose $\alpha=\sqrt{-1}$. We have $X_i$ in $\mathfrak{K}(\mathcal{M})$ with $\{X_i,\partial_{x^2}\}$ effective where
\begin{eqnarray*}
&&X_1=u(x^1,x^2)\partial_{x^1}+v(x^1,x^2)\partial_{x^2},\quad X_2=\ad(\partial_{x^2})X_1,\\
&&u(x^1,x^2)=u_1(x^1)\cos(x^2)+u_2(x^1)\sin(x^2),\\
&&v(x^1,x^2)=v_1(x^1)\cos(x^2)+v_2(x^1)\sin(x^2)\,.
\end{eqnarray*}
Since $\{X_1,\partial_{x^2}\}$ is effective, $u\ne0$.
Let $X_3:=[X_1,X_2]\in E(0)$. Because there are no powers of $x^2$ in $X_1$ or $X_2$, we have that
 $X_3=u_3(x^1)\partial_{x^1}+v_3(x^1)\partial_{x^2}$.
\smallbreak\noindent{\bf Case 3.1.} If $u_3\ne0$, then $\{X_3,\partial_{x^2}\}$ is an effective Lie algebra isomorphic
to $\mathfrak{K}_{\mathcal{A}}$.
\smallbreak\noindent{\bf Case 3.2.} If $u_3=0$ but $v_3\ne0$, then
$X_3=v_3(x^1)\partial_{x^2}$. If $v_3^\prime\ne0$, we may apply Lemma~\ref{L3.2}~(1)
to obtain the relations of Equation~(\ref{E3.a}) with $\alpha=0$. 
 We may then apply Lemma~\ref{L3.2}~(2-a-i)
to see $u^{(0,2)}=0$. Since $u=-u^{(0,2)}$, $u=0$ contrary to our assumption. 
Thus $v_3^\prime=0$ and $[X_1,X_2]$ is a constant non-zero multiple of $\partial_{x^2}$. This
gives the Lie algebra $\mathfrak{so}(3)$.
\smallbreak\noindent{\bf Case 3.3.}  
If $X_3=0$, we have $[X_1,X_2]=0$ and we can apply Lemma~\ref{L3.3}.
\end{proof}

We now examine $E(0)$.
\begin{lemma}\label{L3.5}
Assume there exists $X\in E(0)$ such that $\{X,\partial_{x^2}\}$ is effective.
Then there exists an effective Lie subalgebra $\mathfrak{K}_0\subset\mathfrak{K}(\mathcal{M})$
isomorphic to $\mathfrak{K}_\mathcal{A}$ or $\mathfrak{K}_{\mathcal{B}}$.\end{lemma}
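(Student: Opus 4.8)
Here is how I would prove Lemma~\ref{L3.5}.

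The plan is to carry $X$ into the normal form already used in the proof of Lemma~\ref{L2.2}(1): normalize the coordinates by Lemma~\ref{L3.1}, expand $X$ as in~(\ref{E2.b}) with $\alpha=0$, and apply $\ad(\partial_{x^2})^{i_0}$ to reach~(\ref{E3.b}), so that
\[
X=u(x^1)\partial_{x^1}+\sum_{j=0}^{j_0}v_j(x^1)(x^2)^j\partial_{x^2},\qquad u\ne0.
\]
Since $\ad(\partial_{x^2})$ is a real operator, $E(0)$ is the complexification of $E(0)\cap\mathfrak{K}(\mathcal{M})$, so $X$ may be taken real; this is what will guarantee that the subalgebra produced lies in $\mathfrak{K}(\mathcal{M})$ and not merely in $\mathfrak{K}_{\mathbb{C}}(\mathcal{M})$. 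If $j_0=0$, then $[\partial_{x^2},X]=0$ and, as $u\ne0$, the pair $\{X,\partial_{x^2}\}$ is effective, so $\operatorname{Span}\{X,\partial_{x^2}\}\approx\mathfrak{K}_{\mathcal{A}}$ and we are done. So assume $j_0\ge1$ and put $Z:=\ad(\partial_{x^2})^{j_0}X=j_0!\,v_{j_0}(x^1)\partial_{x^2}\ne0$, which lies in $\mathfrak{K}_0(\mathcal{M})$.

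Next I would dispose of the cases that do not force the strong normalization~(\ref{E3.a}). If $v_{j_0}$ is not constant, then $Z\in\mathfrak{K}_0(\mathcal{M})$ is not a constant multiple of $\partial_{x^2}$, so Lemma~\ref{L3.2}(1) yields~(\ref{E3.a}) with $\alpha=0$, and I pass to the last paragraph. If $v_{j_0}$ is a nonzero constant $c$ and $j_0=1$, then $[\partial_{x^2},X]=c\,\partial_{x^2}$, hence $[-c^{-1}X,\partial_{x^2}]=\partial_{x^2}$; since $\{X,\partial_{x^2}\}$ is effective, $\operatorname{Span}\{X,\partial_{x^2}\}\approx\mathfrak{K}_{\mathcal{B}}$ and we are done. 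Finally, if $v_{j_0}$ is a nonzero constant $c$ and $j_0\ge2$, consider
\[
Z_1:=\ad(\partial_{x^2})^{j_0-1}X=c\,j_0!\,x^2\partial_{x^2}+(j_0-1)!\,v_{j_0-1}(x^1)\partial_{x^2}\in\mathfrak{K}(\mathcal{M}),
\]
an affine Killing field with $a^1\equiv0$, $a^2=c\,j_0!\,x^2+(j_0-1)!\,v_{j_0-1}(x^1)$ and nonzero leading coefficient $c\,j_0!$. Feeding $Z_1$ into the affine Killing equations and using the normalization of Lemma~\ref{L3.1}, one gets $\Gamma_{22}{}^1=0$ from $K_{22}{}^1$ and then $\Gamma_{12}{}^1=\Gamma_{21}{}^1=\Gamma_{22}{}^2=0$ from $K_{12}{}^1$, $K_{21}{}^1$, $K_{22}{}^2$; together with $\Gamma_{11}{}^1=\Gamma_{11}{}^2=0$ this is~(\ref{E3.a}) with $\alpha=0$, and again I pass to the last paragraph.

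It remains to treat the case where~(\ref{E3.a}) holds with $\alpha=0$. By Lemma~\ref{L3.2}(2-c) we have $x^2\partial_{x^2}\in\mathfrak{K}(\mathcal{M})$ and $v_n(x^1)\partial_{x^2}\in\mathfrak{K}_0(\mathcal{M})$ for every $n$. A short bracket computation shows $\ad(x^2\partial_{x^2})$ acts on $v_n(x^1)(x^2)^n\partial_{x^2}$ by the scalar $n-1$ and on $u(x^1)\partial_{x^1}$ by $0$; hence a suitable polynomial in $\ad(x^2\partial_{x^2})$ applied to $X$ annihilates every term with $n\ge2$, and after subtracting the appropriate multiple of the affine Killing field $v_0(x^1)\partial_{x^2}$ and rescaling one is left with $\hat X:=u(x^1)\partial_{x^1}+v_1(x^1)x^2\partial_{x^2}\in\mathfrak{K}(\mathcal{M})$, $u\ne0$. (Equivalently, under~(\ref{E3.a}) the equation $K_{22}{}^2$ forces the $\partial_{x^2}$-component of $X$ to be affine in $x^2$, so the expansion of $X$ already has $j_0\le1$.) A direct computation gives $[\hat X,x^2\partial_{x^2}]=0$, and at a point where $u$ does not vanish and $x^2\ne0$ the pair $\{\hat X,x^2\partial_{x^2}\}$ is effective; thus $\operatorname{Span}\{\hat X,x^2\partial_{x^2}\}$ is an effective Lie subalgebra of $\mathfrak{K}(\mathcal{M})$ isomorphic to $\mathfrak{K}_{\mathcal{A}}$.

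The only step demanding a genuine computation is the one in the second paragraph: showing that the mere existence of an affine Killing field of the shape $(\text{nonzero constant})\,x^2\partial_{x^2}+(\text{function of }x^1)\partial_{x^2}$ already forces the normalization~(\ref{E3.a}). The remaining points — that $X$ may be chosen real, so that the constructed subalgebra sits inside $\mathfrak{K}(\mathcal{M})$, and that effectiveness can be checked at a nearby generic point, as the germ-level setup permits — are routine.
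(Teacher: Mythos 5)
Your proof is correct, and while its endgame (the case $j_0=0$, the case of constant $v_1$ giving $\mathfrak{K}_{\mathcal{B}}$, and the construction of the commuting pair $\{\hat X,\,x^2\partial_{x^2}\}$ via Lemma~\ref{L3.2}~(2-c)) coincides with the paper's, you treat the case $j_0\ge2$ in a genuinely different way. The paper rules out $j_0\ge2$ by iterating $Y_n=[X,Y_{n-1}]$ to produce Killing fields whose $\partial_{x^2}$-components have $x^2$-degrees $n(j_0-1)\to\infty$, contradicting $\dim\{\mathfrak{K}(\mathcal{M})\}\le6$; this is quick but requires checking that the normalizing constants $c_n$ never vanish and leans on the a priori dimension bound of Lemma~\ref{L2.1}. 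You instead show that $j_0\ge2$ already forces the normalization of Equation~(\ref{E3.a}) with $\alpha=0$ --- via Lemma~\ref{L3.2}~(1) when $v_{j_0}$ is non-constant, and via a direct evaluation of $K_{22}{}^1$, $K_{12}{}^1$, $K_{21}{}^1$, $K_{22}{}^2$ on $Z_1=c\,j_0!\,x^2\partial_{x^2}+(j_0-1)!\,v_{j_0-1}(x^1)\partial_{x^2}$ when it is constant (I verified these: they yield $2cj_0!\,\Gamma_{22}{}^1=0$ and then $cj_0!\,\Gamma_{12}{}^1=cj_0!\,\Gamma_{21}{}^1=cj_0!\,\Gamma_{22}{}^2=0$) --- and then observe that under Equation~(\ref{E3.a}) the equation $K_{22}{}^2$ forces the $\partial_{x^2}$-component of $X$ to be affine in $x^2$, so that $j_0\le1$ after all and the case is vacuous. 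This keeps the whole argument at the level of the affine Killing equations and dispenses with the dimension bound, at the cost of one extra explicit computation; once $j_0\le1$ is noted, the polynomial-in-$\ad(x^2\partial_{x^2})$ device is not even needed. Your preliminary remark that $X$ may be taken real, because $E(0)$ is stable under conjugation, is a point the paper leaves implicit and is worth making explicit.
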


\begin{proof} Choose $X\in E(0)$ of the form given in Equation~(\ref{E3.b}) with $\alpha=0$.
If $j_0=0$, then $\{X,\partial_{x^2}\}$ is an effective algebra isomorphic to $\mathfrak{K}_{\mathcal{A}}$.
We may therefore assume that $j_0\ge1$. We suppose $j_0\ge2$ and argue for a contradiction.
Since $j_0-1\le 2j_0-3$, $u(x^1)\partial_{x^1}$ contributes lower order terms and
plays no role. Set:
\begin{eqnarray*}
&&Y_1:=\ad(\partial_{x^2})X=\{ c_1v_{j_0}(x^1)(x^2)^{j_0-1}+O((x^2)^{j_0-2})\}\partial_{x^2},\\
&&Y_2:=[X,Y_1]=\{c_2v_{j_0}^2(x^1)(x_2)^{2(j_0-1)}+O((x^2)^{2(j_0-1)-1})\}\partial_{x^2},\\
&&\dots\\
&&Y_n:=[X,Y_{n-1}]=\{c_nv_{j_0}^n(x^1)(x^2)^{n(j_0-1)}+O((x^2)^{n(j_0-1)-1})\}\partial_{x^2}\,.
\end{eqnarray*}
One verifies all the normalizing constants $c_n$ are non-zero so
 creates an infinite string of linearly independent elements of $\mathfrak{K}(\mathcal{M})$ which is not possible.
We therefore  suppose $j_0=1$ henceforth so
$X=u(x^1)\partial_{x^1}+(v_1(x^1)x^2+v_0(x^1))\partial_{x^2}$ for $v_1\ne0$.
 If $v_1^\prime=0$, then $ [\partial_{x^2},X]=v_1\partial_{x^2}$ is a constant multiple of $\partial_{x^2}$
  and we obtain
 a subalgebra isomorphic to $\mathfrak{K}_{\mathcal{B}}$.  
 We therefore suppose that
 $ v_1^\prime\ne0$ and apply Lemma~\ref{L3.2}~(1) to obtain the relations of Equation~(\ref{E3.a}) with $\alpha=0$.
 By Lemma~\ref{L3.2}~(2-c),   $x^2\partial_{x^2}\in\mathfrak{K}(\mathcal{M})$ and
 $v_0(x^1)\partial_{x^2}\in\mathfrak{K}_0(\mathcal{M})$.
Consequently we have $\tilde X:=u(x^1)\partial_{x^1}+v_1(x^1)x^2\partial_{x^2}\in\mathfrak{K}(\mathcal{M})$. 
 We have $[\tilde X,x^2\partial_{x^2}]=0$ and thus $\operatorname{Span}\{\tilde X,x^2\partial_{x^2}\}$ is an
 effective Lie subalgebra of $\mathfrak{K}(\mathcal{M})$ isomorphic to $\mathfrak{K}_{\mathcal{A}}$.
 This completes the proof of Lemma~\ref{L3.5} and thereby of Lemma~\ref{L2.2}~(1).
\end{proof}

\subsection{The proof of Lemma~\ref{L2.2}~(2)}
Let $\{X,Y\}$ be affine Killing vector fields which are effective and which satisfy $[X,Y]=0$.
The Frobenius Theorem lets us choose local coordinates so $X=\partial_{x^1}$ and $Y=\partial_{x^2}$;
we then have $\Gamma_{ij}{}^k\in\mathbb{R}$. \qed
\subsection{The proof of Lemma~\ref{L2.2}~(3)}
The following is a useful observation.
\begin{ansatz}\label{A3.6}
\rm Let $X=u(x^1)\partial_{x^1}+(x^2+v(x^1))\partial_{x^2}$ where $u\ne0$. 
Set $\tilde x^1=x^1$ and $\tilde x^2=x^2+\varepsilon(x^1)$. Then
$\partial_{\tilde x^1}=\partial_{x^1}-\varepsilon^\prime(x^1)\partial_{x^2}$ and $\partial_{\tilde x^2}=\partial_{x^2}$.
We then have
$X=u(\tilde x^1)\partial_{\tilde x^1}+\{\tilde x^2-\varepsilon(x^1)+v(x^1)+u(x^1)\varepsilon^\prime(x^1)\}\partial_{\tilde x^2}$.
We may then solve the ODE $-\varepsilon(x^1)+v(x^1)+u(x^1)\varepsilon^\prime(x^1)=0$ to express
$
X=u(\tilde x^1)\partial_{\tilde x^1}+\tilde x^2\partial_{\tilde x^2}$ where 
$w(\tilde x^1)\partial_{\tilde x^2}=w(x^1)\partial_{x^2}$ for any $w$.
\end{ansatz}

Let $\{X,Y\}$ be affine Killing vector fields which are effective with $[X,Y]=Y$. 
Choose local coordinates so $Y=\partial_{x^2}$. 
Expand $X=u(x^1,x^2)\partial_{x^1}+v(x^1,x^2)\partial_{x^2}$. Since $[X,Y]=Y$, $\partial_{x^2}u=0$ and $\partial_{x^2}v=-1$ so $X=u(x^1)\partial_{x^1}+(-x^2+v_0(x^1))\partial_{x^2}$. 
Use Ansatz~\ref{A3.6} to change
coordinates so
$X=u(x^1)\partial_{x^1}-x^2\partial_{x^2}$ without changing $Y=\partial_{x^2}$. 
Replace $x^1$ by $\hat x^1$ to ensure 
$u(x^1)\partial_{x^1}=-\hat x^1\partial_{\hat x^1}$ so $X=-\hat x^1\partial_{\hat x^1}-\hat x^2\partial_{\hat x^2}$. 
The Killing equations for $X$ yield $K_{ij}{}^k$: $0=\Gamma_{ij}{}^k(\hat x^1)+\hat x^1  \Gamma_{ij}^\prime{}^k(\hat x^1)$ for
$i,j,k=1,2$. This shows that the Christoffel symbols have the desired form.
\qed

\subsection{The proof of Lemma~\ref{L2.2}~(4)} Throughout this section, we will not
use the normalizations of Lemma~\ref{L3.1}. We shall, however, assume always
that $\partial_{x^2}\in\mathfrak{K}(\mathcal{M})$ so $\Gamma_{ij}{}^k=\Gamma_{ij}{}^k(x^1)$.
We begin by showing:

\begin{lemma}\label{L3.7} Suppose $\mathcal{M}$ has an effective Lie subalgebra isomorphic to $\mathfrak{so}(3)$.
Then the connection is torsion free, the Ricci tensor $\rho$ is positive definite and symmetric, and $\nabla\rho=0$.
\end{lemma}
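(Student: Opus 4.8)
The plan is to exploit the fact that $\mathfrak{so}(3)$ is a simple Lie algebra with no $2$-dimensional subalgebras, which (by the already-established parts of the argument, in particular Lemma~\ref{L3.4} and Lemma~\ref{L3.5}, and Lemma~\ref{L3.3}) must have forced us into a very rigid coordinate picture. First I would pin down a normal form: we may assume $\partial_{x^2}\in\mathfrak{K}(\mathcal{M})$, so $\Gamma_{ij}{}^k=\Gamma_{ij}{}^k(x^1)$, and we have two further Killing fields $Y,Z$ completing the $\mathfrak{so}(3)$-triple $\{X,Y,Z\}$ with $X=\partial_{x^2}$, $[X,Y]=Z$, $[Z,X]=Y$, $[Y,Z]=X$. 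The bracket relations say $Y+\sqrt{-1}Z$ lies in $E(\sqrt{-1})$ and $Y-\sqrt{-1}Z$ in $E(-\sqrt{-1})$; thus (as in Case~3 of Lemma~\ref{L3.4}) $Y$ and $Z$ have components that are trigonometric polynomials $u_a(x^1)\cos x^2 + u_b(x^1)\sin x^2$ in $\partial_{x^1}$ and likewise in $\partial_{x^2}$, with \emph{no} powers of $x^2$, and the $\partial_{x^1}$-component is not identically zero (effectiveness). I would then write out the affine Killing equations $K_{ij}{}^k$ for $Y$ (equivalently for $Y\pm\sqrt{-1}Z$) explicitly: separating the coefficients of $\cos x^2$ and $\sin x^2$ gives a coupled linear ODE system in $x^1$ for the coefficient functions, with the $\Gamma_{ij}{}^k(x^1)$ as parameters.

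The heart of the argument is to extract from these ODEs the three conclusions. \textbf{Torsion-free.} The torsion is $T_{ij}{}^k = \Gamma_{ij}{}^k-\Gamma_{ji}{}^k$; I expect the $K_{ij}{}^k$ equations coming from the three Killing fields, combined, to force $\Gamma_{ij}{}^k=\Gamma_{ji}{}^k$. One clean way: the antisymmetric part of $\Gamma$ defines a $\mathfrak{K}(\mathcal{M})$-invariant $(1,2)$-tensor $T$; but an effective $\mathfrak{so}(3)$ action on a surface has isotropy $\mathfrak{so}(2)$ acting on $T_P\in \Lambda^2 T_P^*M\otimes T_PM \cong T_PM$ by the standard rotation representation, which has no nonzero fixed vector, so $T_P=0$, and since $P$ is arbitrary, $T\equiv 0$. \textbf{$\nabla\rho=0$.} Similarly, once $\nabla$ is torsion-free, $\nabla\rho$ is a $\mathfrak{K}(\mathcal{M})$-invariant $(0,3)$-tensor; the isotropy $\mathfrak{so}(2)$ acts on the $8$-dimensional space $T_P^*M^{\otimes 3}$ (or the appropriate symmetry-restricted subspace, since $\rho$ is symmetric here — the skew part would be handled via Derdzinski's identity, but I would rather argue directly) and I would check it has no invariants of the right symmetry type except $0$; alternatively, invoke that $\nabla\rho$ invariant together with $\dim\mathfrak{K}=3$ acting transitively forces $\nabla\rho=0$ by the standard Ambrose–Singer/Nomizu-type argument, or simply by evaluating the invariance at $P$ against the rotation action. \textbf{$\rho$ symmetric and positive definite.} The symmetry of $\rho$ follows from torsion-freeness once we know the connection is \emph{projectively flat or} directly from the Killing equations; but actually for a general torsion-free affine surface $\rho$ need not be symmetric — here I would use $\nabla\rho=0$ plus the rotational isotropy: an invariant parallel $(0,2)$-tensor at $P$ fixed by $\mathfrak{so}(2)$ must be a multiple of a fixed symmetric bilinear form (the rotation-invariant inner product), hence $\rho$ is symmetric, and a scalar multiple of a positive-definite form; that it is nonzero (hence definite, after possibly replacing by $-\rho$, but in fact the sign is forced) follows because a flat or Ricci-flat homogeneous surface has abelian (not $\mathfrak{so}(3)$) Killing algebra, contradicting effectiveness of $\mathfrak{so}(3)$.

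The main obstacle I anticipate is the positive-definiteness and the nonvanishing of $\rho$: invariance alone gives only that $\rho_P$ is a scalar multiple of the isotropy-invariant metric, and ruling out the scalar being $0$ requires knowing that an $\mathfrak{so}(3)$ of affine Killing fields cannot act on a Ricci-flat (in particular flat) surface. I would handle this by a direct computation: assume $\rho\equiv 0$, return to the explicit trigonometric-polynomial form of $Y$ from the eigenspace analysis, plug into the full $K_{ij}{}^k$ system, and show the solution space of Killing fields is then too small (at most $2$-dimensional with abelian structure) — contradicting the existence of the $\mathfrak{so}(3)$ triple. A secondary nuisance is organizing the case analysis so that the rotational-isotropy representation-theoretic shortcuts are rigorously justified in the affine (non-Riemannian) setting; to keep the proof self-contained I would likely do the torsion and $\nabla\rho$ vanishing by the same explicit-ODE method rather than by abstract invariant theory, reading off $T_{ij}{}^k=0$ and $\rho_{ij;k}=0$ as linear consequences of the $K_{ij}{}^k$ for the three fields. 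Once $\rho$ is a nonzero parallel definite symmetric tensor with $\nabla$ torsion-free, a rescaling identifies $\nabla$ with the Levi-Civita connection of $c\,\rho$ for suitable constant $c$, setting up the final step (Lemma~\ref{L2.2}~(4)) where one shows the curvature is that of the round sphere.
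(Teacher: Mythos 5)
Your strategy diverges substantially from the paper's. The paper normalizes the three Killing fields to the explicit rotation fields $Z=\partial_{x^2}$, $X=\cos(x^2)\partial_{x^1}+\tan(x^1)\sin(x^2)\partial_{x^2}$, $Y=-\sin(x^2)\partial_{x^1}+\tan(x^1)\cos(x^2)\partial_{x^2}$, and then solves the affine Killing ODEs to determine all eight Christoffel symbols outright (only $\Gamma_{12}{}^2=\Gamma_{21}{}^2=-\tan(x^1)$ and $\Gamma_{22}{}^1=\cos(x^1)\sin(x^1)$ survive), after which all three conclusions are read off by direct computation. Your isotropy-representation arguments for the torsion and for $\nabla\rho$ are correct and arguably cleaner: the generator of the one-dimensional isotropy subalgebra at $P$ acts on $T_PM$ with eigenvalues $\pm c\sqrt{-1}$, $c\ne0$, so the spaces of invariants in $\Lambda^2T_P^*M\otimes T_PM$ (eigenvalues $\pm c\sqrt{-1}$) and in $(T_P^*M)^{\otimes 3}$ (eigenvalues $\pm 3c\sqrt{-1}$, $\pm c\sqrt{-1}$) are both zero, forcing $T=0$ and $\nabla\rho=0$.

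The gap is the step ``an invariant parallel $(0,2)$-tensor at $P$ fixed by $\mathfrak{so}(2)$ must be a multiple of a fixed symmetric bilinear form.'' This is false: the space of rotation-invariant bilinear forms on $\mathbb{R}^2$ is two-dimensional, spanned by the invariant inner product $g$ \emph{and} the area form $\omega$ (the eigenvalue $0$ occurs with multiplicity two in $T_P^*M\otimes T_P^*M$). Isotropy invariance therefore only gives $\rho=\lambda g+\mu\omega$ with $\lambda,\mu$ constant, and your argument does not exclude $\lambda=0$, $\mu\ne 0$, i.e.\ a nonzero parallel skew-symmetric Ricci tensor. This is not a case one may wave away: torsion-free locally homogeneous connections with skew-symmetric Ricci tensor are exactly the nontrivial class studied by Kowalski--Opozda--Vl\'a\v sek and Derdzi\'nski cited in the introduction, and showing they admit no effective $\mathfrak{so}(3)$ requires a genuine argument (the paper gets this for free from the explicit determination of the $\Gamma_{ij}{}^k$). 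Your fallback computation only treats $\rho\equiv 0$, and your ``flat/Ricci-flat'' exclusion likewise covers only $\lambda=\mu=0$. If $\lambda\ne0$ the rest of your plan does go through ($g$ parallel and definite makes $\nabla$ a Levi-Civita connection, whose Ricci tensor is symmetric, so $\mu=0$, and the sign is forced because negative or zero curvature yields $\mathfrak{sl}(2,\mathbb{R})$ or the Euclidean algebra rather than $\mathfrak{so}(3)$); the missing piece is precisely the exclusion of a nonzero skew $\rho$, and closing it honestly seems to require either the explicit ODE computation you defer to as a fallback --- at which point you have essentially reproduced the paper's proof --- or an appeal to the skew-Ricci classification, which would make the proof no longer self-contained.
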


\begin{proof} Let $\operatorname{Span}\{X,Y,Z\}$ be an effective Lie subalgebra of
$\mathfrak{K}(\mathcal{M})$ satisfying the relations
of Equation~(\ref{E2.a}) defining $\mathfrak{so}(3)$.
Choose coordinates so $Z=\partial_{x^2}$. Decompose $X=u(x^1,x^2)\partial_{x^1}+v(x^1,x^2)\partial_{x^2}$. We then
have $u^{(0,2)}=- u$. We may then express $u(x^1,x^2)=r(x^1)\cos(x^2+\theta(x^1))$ where by hypothesis
$r(x^1)\ne0$. Use Ansatz~\ref{A3.6} to replace $x^2$ by $x^2+\theta(x^1)$ and rewrite
$X=r_1(y^1)\cos(y^2)\partial_{y^1}+v_1(x^1,x^2)\partial_{y^2}$ without changing $\partial_{x^2}$. Choose coordinates
$(z^1,z^2)=(f(y^1),y^2)$ so that $\partial_{z^1}=r_1(y^1)\partial_{y^1}$ and $\partial_{z^2}=\partial_{y^2}$. Since 
$v_1^{(0,2)}=-v_1$, the bracket relation $[Z,X]=Y$ gives
\begin{eqnarray*}
&&X=\cos(z^2)\partial_{z^1}+\{v_c(z^1)\cos(z^2)+v_s(z^1)\sin(z^2)\}\partial_{z^2},\\ 
&&Y=-\sin(z^2)\partial_{z^1}+\{-v_c(z^1)\sin(z^2)+v_s(z^1)\cos(z^2)\}\partial_{z^2}\,.
\end{eqnarray*}
The bracket relation $[X,Y]=Z$ now yields $-v_c(x^1)^2-v_s(x^1)^2+v_s^\prime(x^1)=1$ and
$v_c(x^1)=0$.
We solve this to obtain $v_c(x^1)=0$ and $v_s(x^1)=\tan(x^1+c)$; we can further normalize the coordinates so $c=0$.
Thus, after a suitable change of notation, we have $Z=\partial_{x^2}$,
\begin{eqnarray*}
&&X=\cos(x^2)\partial_{x^1}+\tan(x^1)\sin(x^2)\partial_{x^2},\\
&&Y=-\sin(x^2)\partial_{x^1}+\tan(x^1)\cos(x^2)\partial_{x^2}.
\end{eqnarray*}
We have $\Gamma_{ij}{}^k=\Gamma_{ij}{}^k(x^1)$. We evaluate the affine Killing equations corresponding to $X$
at $x^2=0$ to obtain 
\medbreak\quad
$K_{11}{}^1:\ 0=\Gamma_{11}^\prime{}^1(x^1)$,\hfill
$K_{11}{}^2:\ 0=-\Gamma_{11}{}^2(x^1)\tan(x^1)+\Gamma_{11}^\prime{}^2(x^1)$,
\smallbreak\quad
$K_{12}{}^1:\ 0=\Gamma_{12}{}^1(x^1)\tan(x^1)+\Gamma_{12}^\prime{}^1(x^1)$,\hfill
$K_{12}{}^2:\ 0=\sec^2(x^1)+\Gamma_{12}^\prime{}^2(x^1)$,
\smallbreak\quad
$K_{21}{}^1:\ 0=\Gamma_{21}{}^1(x^1)\tan(x^1)+\Gamma_{21}^\prime{}^1(x^1)$,\hfill
$K_{212}:\ 0=\sec^2(x^1)+\Gamma_{21}^\prime{}^2(x^1)$,
\smallbreak\quad
$K_{22}{}^1:\ 0=-1+2\Gamma_{22}{}^1(x^1)\tan(x^1)+\Gamma_{22}^\prime{}^1(x^1)$,
\smallbreak\quad
$K_{22}{}^2:\ 0=\Gamma_{22}{}^2(x^1)\tan(x^1)+\Gamma_{22}^\prime{}^2(x^1)$.
\medbreak\noindent We solve these ODEs to obtain real constants $a_{ij}{}^k$ so that
$$\begin{array}{ll}
\Gamma_{11}{}^1(x^1)=a_{11}{}^1,&\Gamma_{11}{}^2(x^1)=a_{11}{}^2\sec(x^1),\\[0.05in]
\Gamma_{12}{}^1(x^1)=a_{12}{}^1\cos(x^1),&\Gamma_{12}{}^2(x^1)=a_{12}{}^2-\tan(x^1),\\[0.05in]
\Gamma_{21}{}^1(x^1)=a_{21}{}^1\cos(x^1),&\Gamma_{21}{}^2(x^1)=a_{21}{}^2-\tan(x^1),\\[0.05in]
\Gamma_{22}{}^1(x^1)=a_{22}{}^1\cos(x^1)^2+\cos(x^1)\sin(x^1),&
\Gamma_{22}{}^2(x^1)=a_{22}{}^2\cos(x^1).
\end{array}$$
Let $i\ne j$ and $1\le i,j\le2$. We evaluate the affine Killing equations for $X$ at $(x^1,x^2)=(0,\frac\pi2)$ to obtain
$$\begin{array}{ll}
K_{ii}{}^i:\ 0=a_{ii}{}^j + a_{ij}{}^i + a_{ji}{}^i,&
K_{ii}{}^j:\ 0=-a_{ii}{}^i + a_{ij}{}^j + a_{ji}{}^j\,,\\
K_{ij}{}^i:\ 0=-a_{ii}{}^i + a_{ij}{}^j + a_{jj}{}^i,&
K_{ij}{}^j:\ 0=-a_{ii}{}^j -a_{ij}{}^i + a_{jj}{}^j.\\
\end{array}$$ 
These equations imply that all the $a_{ij}{}^k$ vanish and thus the non-zero Christoffel
symbols are
$\Gamma_{12}{}^2(x^1)=\Gamma_{21}{}^2(x^1)=-\tan(x^1)$ and
$\Gamma_{22}{}^1(x^1)=\cos(x^1)\sin(x^1)$. We complete the proof of the Lemma by computing that
$\rho=(dx^1)^2+\cos(x^1)^2(dx^2)^2$ and $\nabla\rho=0$.\qed

\medbreak We apply Lemma~\ref{L3.7}. We have shown $\nabla$ is torsion free. 
Let $ds^2=\rho$. We have $\nabla\rho=0$. This shows $\nabla$ is the 
Levi-Civita connection of $ds^2$; this is a positive definite metric with Einstein constant 1.
Thus this geometry is modeled on the round sphere.
This completes the proof of Lemma~\ref{L2.2} and thereby of Theorem~\ref{T1.1}. 
\end{proof}

\end{document}